\documentclass[twoside, a4paper, 11pt, ]{amsart}

\usepackage[centertags]{amsmath}
\usepackage{amsfonts}
\usepackage{amssymb}
\usepackage{amsthm}
\usepackage{newlfont}
\usepackage{amscd}
\usepackage{amsmath,amscd}

\usepackage[arrow,matrix]{xy}
\usepackage{verbatim}
\usepackage{url}

\DeclareSymbolFont{rsfs}{U}{rsfs}{m}{n}
\DeclareSymbolFontAlphabet{\mathcal}{rsfs}


\DeclareFontEncoding{OT2}{}{} 
\DeclareTextFontCommand{\textcyr}{\fontencoding{OT2}\fontfamily{wncyr}\fontseries{m}\fontshape{n}\selectfont}
\newcommand{\Sha}{\textcyr{Sh}}
\newcommand{\Be}{\textcyr{B}}
\newcommand{\Ch}{\textcyr{Ch}}

\newcommand{\QQ}{\mathbb{Q}}
\newcommand{\ZZ}{\mathbb{Z}}
\newcommand{\be}{\begin{equation}}
\newcommand{\ee}{\end{equation}}

\newtheorem{thm}{Theorem}[section]

\newtheorem{lem}[thm]{Lemma}
\newtheorem{prop}[thm]{Proposition}

\theoremstyle{definition}
\newtheorem{defn}[thm]{Definition}
\newtheorem{subsec}[thm]{}
\theoremstyle{remark}
\newtheorem{rem}[thm]{Remark}



\DeclareMathOperator{\Ker}{ker}
\DeclareMathOperator{\Gal}{Gal}
\DeclareMathOperator{\Coker}{coker}

\DeclareMathOperator{\Br}{Br}
\DeclareMathOperator{\loc}{loc}



\def\Gm{{\mathbb{G}_m}}


\def\kb{{\bar k}}

\def\multmap{{\mu}}

\def\cV{{\cal V}}

\def\sV{{\cV}}


\def\textrm#1{\text{\rm #1}}

\def\Gal{\textrm{Gal}}

\def\loc{\textup{loc}}
  \def\Hom{\textrm{Hom}}
\def\im{\textrm{im\,}}      \def\int{\textrm{int}}
\def\inv{\textup{inv}}      
\def\ker{{\textup{ker}\,}}

   \def\tor{^{\textrm{tor}}}
\def\red{^{\textrm{red}}}     

\def\sss{^{\textrm{ss}}}     \def\uu{^{\textrm{u}}}
      
         \def\mult{^{\textrm{mult}}}

\def\Pic{\textup{ Pic\,}}
\def\Br{\textup{Br}\,}
\def\Brs{\textup{Br}}
\def\ev{\textup{ev}}
\def\Brn{\textup{ Br}_1}
\def\Brz{\textup{Br}_0}
\def\Bra{\textup{Br}_{\textup{a}} }

\def\uu{\textup{u}}
\def\red{\textup{red}}
\def\sss{\textup{ss}}

\def\tor{\textup{tor}}
\def\mult{\textup{mult}}

\font\cyr=wncyr10 scaled \magstep1
\def\Bcyr{\text{\cyr B}}


\def\mWS{\textup{ob}_S}

\setcounter{section}{-1}

\newcommand{\labelto}[1]{\xrightarrow{\makebox[1.5em]{\scriptsize ${#1}$}}}

\def\kbar{{\bar{k}}}
\def\G{{\mathbb{G}}}

\def\Gbar{{\overline{G}}}
\def\Xbar{{\overline{X}}}

\def\sX{{\mathbb X}}
\def\XX{{\sX}}
\newcommand{\into}{\hookrightarrow}
\newcommand{\isoto}{\overset{\sim}{\to}}

\voffset=-1cm
\vsize=230mm 
\textheight=230mm 
\hsize=150mm 
\textwidth=150mm 
\hoffset=-1cm
\parskip=2pt minus1pt
\parindent=12pt

\begin{document}

\title[Weak approximation]
{A cohomological obstruction to weak approximation for homogeneous spaces
}
\author{Mikhail Borovoi and Tomer M. Schlank}
\address{Borovoi: Raymond and Beverly Sackler School of Mathematical Sciences, Tel Aviv University, Tel Aviv 69978, Israel}
\thanks{This research was partially supported by the
Hermann Minkowski Center for Geometry
and by ISF grant 807/07}
\email{borovoi@post.tau.ac.il}
\address{Schlank: Institute of Mathematics, Hebrew University, Giv'at-Ram, Jerusalem 91904, Israel}
\email{tomer.schlank@gmail.com}
\subjclass[2000]{Primary: 14M17; Secondary: 14G05, 20G10, 20G30}
\keywords{Brauer-Manin obstruction, weak approximation, homogeneous spaces, linear algebraic groups, Brauer group, Galois cohomology}

\maketitle

\begin{abstract}
Let $X$ be a homogeneous space, $X = G/H$, where $G$ is a connected linear algebraic group over a number field $k$,
and $H \subset G$ is a $k$-subgroup (not necessarily connected). Let $S$ be a finite set of places of $k$.
We compute a Brauer-Manin obstruction to weak approximation for $X$ in $S$ in terms of Galois cohomology.
\end{abstract}

\tableofcontents

\section{Introduction}\label{s:intro}

Let $X$ be an algebraic variety over a number field $k$,
and let $S$ be a finite set of places of $k$.
We say that $X$ has the \emph{weak approximation property in $S$}
if $X(k)$ is dense in $\prod_{v\in S} X(k_v)$ with respect to the
diagonal embedding.
 We say that $X$ has the \emph{weak approximation property} if
it has the weak approximation property in $S$ for any
finite set $S$ of places of $k$.

In 1970 Manin \cite{Manin} introduced a
general obstruction to the Hasse principle for a $k$-variety $X$,
using the Brauer group of $X$.
Using Manin's ideas, Colliot-Th\'{e}l\`{e}ne and Sansuc ~\cite{CTS} defined a Brauer-Manin obstruction
to weak approximation for $X$ in all $S$ simultaneously
(see also \cite{Sk}, \S5.2).
We consider a variation of this obstruction introduced in \cite{Bor96},
which is a Brauer-Manin obstruction to weak approximation
in a {specific} set of places $S$ for a $k$-variety $X$ having a $k$-point.

Write $k_S = \prod_{v \in S}k_v,$ then
$$X(k_S) = \prod \limits_{v \in S}X(k_v).$$
We assume that $X(k)\neq \emptyset$.
The Brauer-Manin obstruction of \cite{Bor96} is a map
$$
\mWS \colon X(k_S)\to \Be_{S,\emptyset}(X)^D,
$$
where $\Be_{S,\emptyset}(X):=\Be_S(X)/\Be(X)$ is a certain subquotient of the Brauer group $ \Br X$
(for details see \S\ref{sec:prelim-Brauer})
and ${}^D$ denotes the dual group, i.e. $\bullet^D = \Hom(\bullet,\QQ / \ZZ)$.
The map $\mWS$ is an obstruction in the following sense: if $x_S \in X(k_S)$ and $\mWS(x_S) \neq 0$, then $x_S$
is not contained in the closure of $X(k)$ in $X(k_S)$.
In particular, if the map $\mWS$ is not identically 0, then  $X$ does not have weak approximation in $S$.

Now let $X$ be a homogeneous space of a connected linear $k$-group $G$.
It is convenient to use the notion of a quasi-trivial group, introduced by Colliot-Thelene,
see   Definition 2.1 in \cite{CT06}  or Definition ~\ref{def:QT_grp} below.
By  Lemma ~\ref{l:QT_not_resrict} below we may assume that $X$ is a homogeneous space of a \emph{quasi-trivial} $k$-group $G$.
In ~\cite{Bor:cohomological-obstruction} we computed the Brauer-Manin obstruction  of \cite{Bor96}
to the Hasse principle for $X$ in terms of Galois cohomology.
Here we do a similar computation for the Brauer-Manin obstruction of \cite{Bor96}
to weak approximation for $X$.
We assume that $X$ has a $k$-point $x^0$.
Let $H$ denote the stabilizer of $x^0$ in $G$, then $X=G/H$.

From now on we  assume that $X=G/H$, where $G$ is quasi-trivial (and $H$ is not necessarily connected).
We  describe the group $\Be_{S,\emptyset}(X)^D$ and the map $\mWS$ in terms of Galois cohomology.
Let $H^{\mult}$ denote the greatest quotient of $H$ that is a group of multiplicative type.
Write
$$H^1(k_S,H^{\mult}) = \prod_{v \in S}H^1(k_v,H^{\mult})$$
and set
$$\Ch_S^1(k,H^{\mult}) = \Coker[H^1(k,H^{\mult}) \xrightarrow{\loc_S}H^1(k_S,H^{\mult}) ],$$
where $\loc_S$ is the localization map.
The following theorem describes the group  $\Be_{S,\emptyset}(X)^D$ in terms of Galois cohomology.

\begin{thm} [Theorem ~\ref{t:main_gen}]\label{thm:Be_intro}
Let $X=G/H$, where $G$ is a quasi-trivial $k$-group over a number field $k$,
and $H$ is a $k$-subgroup of $G$.
Then there is a canonical isomorphism:
$$ \phi \colon \Be_{S,\emptyset}(X)^D \isoto \Ch^1_S(k,H^{\mult}).$$
\end{thm}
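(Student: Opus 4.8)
The plan is to pass from the Brauer group to Galois cohomology and then to dualize by arithmetic duality, following the pattern of the companion computation for the Hasse principle in \cite{Bor:cohomological-obstruction}. First I would exploit the hypothesis $X(k)\neq\emptyset$: the chosen $k$-point $x^0$ splits the structural map $\Br k\to\Brn X$, so the exact sequence of Colliot-Th\'el\`ene and Sansuc computes $\Bra X$ from the geometric Picard group together with the geometric units $U:=\kbar[X]^\times/\kbar^\times$. Since $G$ is quasi-trivial we have $\Pic\Gbar=0$, and Sansuc's exact sequence takes the form $0\to U\to\widehat G\to\widehat H\to\Pic\Xbar\to 0$. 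The cleanest way to package the contributions of $U$ and $\Pic\Xbar$ simultaneously is through the two-term complex $\widehat C:=[\widehat G\to\widehat H]$, placed in degrees $0$ and $1$: this yields a canonical identification $\Bra X\isoto\mathbb H^1(k,\widehat C)$. Under it, I would verify that $\Be_{S,\emptyset}(X)$ becomes the subquotient
\[
\frac{\ker\bigl[\mathbb H^1(k,\widehat C)\to\textstyle\prod_{v\notin S}\mathbb H^1(k_v,\widehat C)\bigr]}{\ker\bigl[\mathbb H^1(k,\widehat C)\to\textstyle\prod_{v}\mathbb H^1(k_v,\widehat C)\bigr]},
\]
the numerator being the classes that are locally trivial outside $S$ and the denominator those locally trivial at all places; this is exactly the subquotient seen by the pairing defining $\mWS$, which involves only the places of $S$.

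Second, I would dualize the coefficients. The anti-equivalence between finitely generated Galois modules and groups of multiplicative type turns $\widehat C=[\widehat G\to\widehat H]$ into a complex $C'=[H^{\mult}\to T_G]$ of groups of multiplicative type, where $T_G$ is the torus with character module $\widehat G$ and $H^{\mult}$ is the group of multiplicative type with character module $\widehat{H^{\mult}}=\widehat H$. Because $G$ is quasi-trivial, $\widehat G$ is a permutation module, so $T_G$ is a quasi-trivial torus; hence $H^1(k,T_G)=0$, $H^1(k_v,T_G)=0$ for every $v$, and $T_G$ has weak approximation. These vanishings are what let me strip $T_G$ off: the relevant degree-one hypercohomology of $C'$, and its $S$-localization cokernel, collapse onto $H^{\mult}$ alone, so that the group produced on the dual side is precisely $\Ch^1_S(k,H^{\mult})=\Coker[\loc_S\colon H^1(k,H^{\mult})\to H^1(k_S,H^{\mult})]$.

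The heart of the proof, and the step I expect to be the main obstacle, is the arithmetic-duality computation linking these two descriptions. Locally, Tate duality supplies perfect $\QQ/\ZZ$-valued pairings between the hypercohomology of $\widehat C$ and that of $C'$ in complementary degrees; globally I would invoke the Poitou--Tate exact sequence for the complex $\widehat C$ (in its form for two-term complexes of tori and groups of multiplicative type). The inclusion asserting that the image of $\loc_S$ annihilates $\Be_{S,\emptyset}(X)$ is the easy direction and amounts to global reciprocity: for a global class the sum of local invariants over all $v$ vanishes, and the contributions outside $S$ disappear by the very definition of the numerator above. The reverse inclusion is delicate: given a family $(\gamma_v)_{v\in S}$ that annihilates the subquotient, I must manufacture a \emph{global} class on the dual side restricting to it. Here I would extend the associated functional, which a priori only kills the classes that are locally trivial off $S$, to a continuous functional on the full restricted product of the local groups, using the injectivity of $\QQ/\ZZ$ together with Pontryagin duality, and then recognize it as a global class through the exactness in Poitou--Tate. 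Making this extension rigorous---controlling the closedness of the relevant image inside the restricted product, and carrying out the low-degree ($H^0$-level) bookkeeping needed to remove the quasi-trivial torus $T_G$ without leaving an error term---is where the substantive work lies.
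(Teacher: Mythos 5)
Your plan is essentially correct, but it takes a genuinely different route from the paper, so let me compare the two before flagging what needs repair. The paper never forms the two-term complex $[\widehat G\to\widehat H]$ or any hypercohomology: it reduces everything to ordinary Galois cohomology of tori by two explicit auxiliary pairs (\S\ref{s:general_case}) --- first $Y=(G\times_k Q)/i_*(H)$, a torsor over $X$ under a quasi-trivial torus $Q\supset H^{\mult}$, chosen so that $H_Y^{\mult}\into G_Y^{\tor}$ and hence $\Pic\overline Y=0$; then the torus $Z=G_Y^{\tor}/H_Y^{\mult}$. It proves that $\pi^*\colon\Be_S(X)\to\Be_S(Y)$ and $\mu^*\colon\Be_S(Z)\to\Be_S(Y)$ are isomorphisms (Lemma \ref{l:pre_main2}, via Sansuc's torsor sequence (6.10.3) and $\Sha^2_S(\widehat Q)=0$), and in the torus case (Theorem \ref{t:main_torus}) obtains the perfect duality $\Sha^1_{S,\emptyset}(\widehat H)\times\Ch^1_S(k,H)\to\QQ/\ZZ$ by Ono's lemma \cite{Ono}, reducing perfectness to Sansuc's Lemma 1.4 for \emph{finite} modules --- thereby sidestepping entirely the ``delicate'' restricted-product/extension argument you anticipate. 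Your route --- identify $\Bra X$ with hypercohomology of $\widehat C=[\widehat G\to\widehat H]$, dualize to $C'=[H^{\mult}\to T_G]$, and invoke Poitou--Tate for two-term complexes of multiplicative type --- is precisely the alternative the paper itself acknowledges after Theorem \ref{t:dual_intro}, citing \cite{Demarche-Suites}, Thm.~6.3; it is shorter and more canonical if one grants the identification $\Bra X\cong\mathbb{H}^\bullet(k,\widehat C)$ (due to Borovoi and van Hamel, not proved in this paper) and Demarche's sequence, whereas the paper's method is self-contained at the level of classical results and transports the Manin pairing and $c_S$ simultaneously, which it needs anyway for Theorem \ref{t:main_intro}.

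Two concrete corrections to your outline. First, a degree slip: with $\widehat C$ placed in degrees $0$ and $1$, the canonical identification is $\Bra X\isoto\mathbb{H}^2(k,\widehat C)$, not $\mathbb{H}^1$. Test $H=1$: then $\Bra G\cong H^2(k,\widehat G)$ (Sansuc, Lemma 6.9(ii), exactly the map $\theta$ of Lemma \ref{l:h2_br}), whereas your $\mathbb{H}^1(k,\widehat C)=\Coker[\widehat G^{\Gal(\kbar/k)}\to\widehat H^{\Gal(\kbar/k)}]$ since $H^1(k,\widehat G)=0$ for permutation $\widehat G$. So every $\Sha^1_S(\widehat C)$ in your subquotient description should read $\Sha^2_S(\widehat C)$; the collapse $\Sha^2_{S,\emptyset}(\widehat C)\cong\Sha^1_{S,\emptyset}(\widehat H)$ then follows from $H^1(k_v,\widehat G)=0$ and $\Sha^2_S(\widehat G)=0$, which is the paper's Lemma \ref{l:iso_Sha_Be} in disguise. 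Second, on stripping off $T_G$: the \emph{algebraic} cokernel $\Coker[\mathbb{H}^0(k,C')\to\mathbb{H}^0(k_S,C')]$ is strictly larger than $\Ch^1_S(k,H^{\mult})$ --- the snake lemma shows it receives $\Coker[T_G(k)\to T_G(k_S)]$, which is far from zero even for $T_G=\Gm$ --- so the collapse you want happens only at the level of the pairing: a class in $\Sha^2_S(\widehat C)$ has image in $\Sha^2_S(\widehat G)=0$ (Sansuc (1.9.1)), so the $T_G(k_S)$-contributions pair to zero and the duality factors through $\Ch^1_S(k,H^{\mult})$. This is the same vanishing, plus weak approximation for the quasi-trivial torus (compare the paper's Lemma \ref{l:iso_A_ch}\eqref{l:iso_A_ch:it:cs_is_complete}), that the paper uses; with it, and with \cite{Demarche-Suites} standing in for your hand-rolled extension argument, your proof closes.
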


We  wish to describe not only the group $\Be_{S,\emptyset}(X)^D$, but also
the map $\mWS$ in terms of Galois cohomology.
Let $x_S = (x_v)_{v\in S} \in X(k_S)$. For $v \in S$ consider the $G(k_v)$-orbit $G(k_v). x_v$ of $x_v$ in $X(k_v)$.
This orbit defines a cohomology class
$$\xi_v(x_v) \in \Ker [H^1(k_v,H) \to H^1(k_v,G)],$$
cf. \cite[\S I.5.4]{Serre:CG}.
Consider the canonical epimorphism $\mu\colon H \to H^{\mult},$ and set
\begin{align*}
\xi^{\mult}_v(x_v) &= \mu_*(\xi_v(x_v)) \in H^1(k_v,H^{\mult}),\\
\xi^{\mult}_S(x_S) &= \left(\xi^{\mult}_v(x_v)\right)_{v \in S} \in H^1(k_S,H^{\mult}).
\end{align*}
Let $c_S(x_S)$ denote the image of $\xi^{\mult}_S(x_S)$ under the canonical map
$$ H^1(k_S,H^{\mult}) \to \Ch^1_S(k,H^{\mult}).$$
The following theorem describes the map  $\mWS$ in terms of Galois cohomology.

\begin{thm} [Theorem ~\ref{t:main_gen}] \label{t:main_intro}
Let $k$, $G$, $H$, and $X$ be as in Theorem \ref{thm:Be_intro}.
The following diagram commutes:
$$
\xymatrix{
X(k_S)\ar@{=}[d] \ar[r]^-{\mWS} & \Be_{S,\emptyset}(X)^D \ar[d]^{\phi}_{\cong}\\
X(k_S)\ar[r]^-{-c_S} & \Ch_S^1(k,H^{\mult}).
}
$$
\end{thm}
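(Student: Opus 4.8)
The plan is to prove the commutativity by testing the two composites $X(k_S)\to\Ch^1_S(k,H^{\mult})$ against elements of $\Be_{S,\emptyset}(X)$, using that $\phi$ (from Theorem~\ref{thm:Be_intro}) is adjoint to a duality pairing. Recall that since $G$ is quasi-trivial we have $\hat G=0$ and $\Pic\Gbar=0$, whence a canonical Galois-module isomorphism $\Pic\Xbar\cong\hat H^{\mult}$, where $\hat H^{\mult}$ is the character module of $H^{\mult}$; consequently every class in the subquotient $\Be_{S,\emptyset}(X)$ of $\Br X$ acquires a well-defined ``type'' in $H^1$ of $\hat H^{\mult}$. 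On the other side, local Tate--Nakayama duality gives, for each $v$, a perfect pairing $\langle\,,\,\rangle_v\colon H^1(k_v,\hat H^{\mult})\times H^1(k_v,H^{\mult})\to\QQ/\ZZ$ (induced by $\hat H^{\mult}\otimes H^{\mult}\to\Gm$ and $\inv_v$). The resulting global pairing descends to a perfect pairing between $\Be_{S,\emptyset}(X)$ and $\Ch^1_S(k,H^{\mult})$, and $\phi$ is the isomorphism adjoint to this pairing. Thus it suffices to show that for every $x_S\in X(k_S)$ and every $b\in\Be_{S,\emptyset}(X)$ one has $\langle\mWS(x_S),b\rangle=-\langle b,c_S(x_S)\rangle$.

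The core of the argument is a local evaluation formula. Fix $v\in S$ and a representative Brauer class $b$; I would compute the local invariant $\inv_v(b(x_v))$ of the evaluation of $b$ at $x_v$. The $G(k_v)$-orbit of $x_v$ is an $H$-torsor over $k_v$ with class $\xi_v(x_v)\in\Ker[H^1(k_v,H)\to H^1(k_v,G)]$, and pushing forward along $\mu\colon H\to H^{\mult}$ yields $\xi^{\mult}_v(x_v)$. Using the identification $\Pic\Xbar\cong\hat H^{\mult}$ together with the compatibility of the connecting homomorphism for the $H$-torsor $G\to X$ with cup products, I would establish the key identity
$$\inv_v\bigl(b(x_v)\bigr)=-\,\langle b_v,\ \xi^{\mult}_v(x_v)\rangle_v,$$
where $b_v\in H^1(k_v,\hat H^{\mult})$ is the localization of the type of $b$. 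This is where the sign $-1$ enters; it is the analogue for weak approximation of the evaluation formula used for the Hasse principle in \cite{Bor:cohomological-obstruction}, which I would either invoke directly or re-derive by restricting $b$ along $G(k_v)\to X(k_v)$ and exploiting quasi-triviality of $G$.

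Granting the local formula, the global assembly is formal. By definition of $\mWS$ we have $\langle\mWS(x_S),b\rangle=\sum_{v\in S}\inv_v(b(x_v))$, which by the evaluation formula equals $-\sum_{v\in S}\langle b_v,\xi^{\mult}_v(x_v)\rangle_v$, i.e. minus the global pairing of $b$ with $\xi^{\mult}_S(x_S)$. It then remains to check that this pairing factors through the class $c_S(x_S)\in\Ch^1_S(k,H^{\mult})$, that is, that classes in the image of $\loc_S\colon H^1(k,H^{\mult})\to H^1(k_S,H^{\mult})$ pair trivially with $b$; this is exactly the reciprocity condition built into the definition of $\Be_{S,\emptyset}(X)$ (a global Brauer class has sum of local invariants zero), so the pairing descends and yields $\langle\mWS(x_S),b\rangle=-\langle b,c_S(x_S)\rangle$. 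Since $\phi$ is adjoint to the descended pairing, we conclude $\phi\circ\mWS=-c_S$, as required.

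The main obstacle is the local evaluation formula and, within it, the bookkeeping of signs and conventions. Matching the sign in $\inv_v(b(x_v))=-\langle b_v,\xi^{\mult}_v(x_v)\rangle_v$ requires pinning down simultaneously the chosen direction of $\Pic\Xbar\cong\hat H^{\mult}$, the sign conventions in the connecting map $H^1(k_v,H)\to\cdots$ and in the cup-product duality pairing, and the sign in the definition of $\mWS$. Each ingredient is standard, but reconciling them so that the overall sign is exactly $-1$ rather than $+1$ is the delicate point; I expect the $-1$ to originate in the convention for the connecting homomorphism attached to the torsor $G\to X$, consistently with the Hasse-principle computation of \cite{Bor:cohomological-obstruction}.
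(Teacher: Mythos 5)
Your global assembly and the adjointness reduction have the right shape, but the proposal breaks at its foundation. Quasi-triviality of $G$ does \emph{not} give $\widehat{G}=0$: it means $\widehat{G}$ is a permutation module ($G=\Gm$ is quasi-trivial with $\widehat{G}=\ZZ$); $\widehat{G}=0$ holds precisely when $G$ is semisimple, which is why the finite-stabilizer arguments of \cite{Harari} and \cite{Demarche} that you are implicitly transplanting do not extend verbatim. By Proposition~\ref{l:H_k_pic_u} the correct statement is $\Pic\Xbar\cong\Coker[\XX(G)\to\XX(H)]\cong\XX(\Ker[H^{\mult}\to G^{\tor}])$, in general a proper quotient of $\widehat{H}=\widehat{H^{\mult}}$. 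Worse, the ``type'' cannot see the classes you need: whenever $H^{\mult}\to G^{\tor}$ is injective (for instance for the paper's own auxiliary space $Y$ of \ref{subsec:G,H,X,Y}) one has $\Pic\overline{Y}=0$, yet $\Be_S(Y)\cong\Sha^1_S(\widehat{H})$ is nonzero in general; all these classes come from $H^2(k,U(\overline{Y}))$ in Sansuc's exact sequence and are killed by the type map $\Bra Y\to H^1(k,\Pic\overline{Y})$. Consequently the class $b_v\in H^1(k_v,\widehat{H})$ in your key identity $\inv_v(b(x_v))=-\langle b_v,\xi^{\mult}_v(x_v)\rangle_v$ has no definition by your mechanism: producing a class $t\in\Sha^1_S(\widehat{H})$ with $\beta_S(t)=b$ is exactly Theorem~\ref{t:main_gen}\eqref{t:main_gen:it:be}, i.e. the real content of the theorem, and it involves a degree shift ($t$ corresponds to a class in $\Sha^2_S$ of a torus, Lemma~\ref{l:iso_Sha_Be}, not to a class in $H^1(k,\Pic\Xbar)$). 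You also tacitly use that the pairing $\Ch^1_S(k,H^{\mult})\times\Sha^1_{S,\emptyset}(\widehat{H})\to\QQ/\ZZ$ is nondegenerate, so that testing against $\Be_{S,\emptyset}(X)$ suffices; that is Theorem~\ref{t:dual_t}, proved in the paper via Ono's lemma, and must be quoted as an input rather than assumed.

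The local evaluation formula, which you correctly identify as the crux, is asserted rather than proved --- and the paper in fact never proves any evaluation formula directly on $X$. It reduces by d\'evissage: embed $H^{\mult}\into Q$ into a quasi-trivial torus, set $Y=(G\times_k Q)/H$ via the graph embedding $h\mapsto(h,i(\mu(h)))$, so that $Y\to X$ is a torsor under $Q$, $H_Y^{\mult}\to G_Y^{\tor}$ is injective and $\Pic\overline{Y}=0$; then project to the torus $Z\cong G_Y^{\tor}/H_Y^{\mult}$. Lemmas~\ref{l:premain1} and~\ref{l:pre_main2} show that $\pi$ and $\mu$ preserve both $c_S$ and the Manin pairing and induce isomorphisms on $\Sha^1_S(\widehat{H})$ and on $\Be_S$, reducing everything to the torus case (Theorem~\ref{t:main_torus}); there evaluation equals cup product with an $H^2(k_v,\widehat{T})$-class by Sansuc's identification (Lemma~\ref{l:h2_br}), and the sign $-1$ comes from the cochain identity $d(t_G\cup f_G)=d t_G\cup f_G+t_G\cup d f_G$ (Lemma~\ref{l:cup_lem}), i.e. from moving the connecting map of $0\to\widehat{T}\to\widehat{G}\to\widehat{H}\to 0$ across the cup product --- not, as you conjecture, from the connecting map of the torsor $G\to X$. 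Your fallback of invoking \cite{Bor:cohomological-obstruction} is circular in spirit: the analogous formula there is itself established by the same auxiliary-space reduction, and importing it into the $S$-pairing would still require the localization compatibilities of $\beta_S$ that only the $(Y,Z)$ construction supplies. Your reciprocity/descent step is fine and matches \S\ref{s:general_case}; to repair the proof, replace the $\Pic\Xbar$ device by the passage to $Y$ and $Z$ and transport the torus-case formula back via Lemmas~\ref{l:premain1} and~\ref{l:pre_main2}.
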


Note that results similar to our Theorem \ref{t:main_intro}
in the special case of a {\em finite} group $H$
were obtained earlier by D.~Harari \cite{Harari}, \S 5, Theorem 4,
and C.~Demarche \cite{Demarche}, \S 7, Corollary 4.

\begin{rem}
In this paper we consider the Brauer-Manin obstruction $\mWS$ of \cite{Bor96},
which is a variation of the Brauer-Manin obstruction associated with the group $\Be_\omega(X)$,
cf. \cite[(6.2.3)]{Sansuc}.
By \cite[Thm. 2.4]{Bor96}, if $H$ is connected or abelian, then  $\mWS$
is the only obstruction to weak approximation in $S$ for $X$.
However, it may be not the only obstruction if $H$ is non-connected and non-abelian,
see Demarche \cite[\S 6, Prop.~2]{Demarche}.
The failure of weak approximation in the counter-example of Demarche
can be explained by the  Brauer-Manin obstruction associated with
the larger group $\textup{Br}_{\textup{nr},1}(X)$, {\em loc. cit.}
\end{rem}

Now let $H$ be an arbitrary $k$-group of multiplicative type. Let
$\widehat{H}:=\Hom_\kbar(H_\kbar,\G_{m,\kbar})$
be the (geometric) character group of $H$.
Let $\Sha_S^1(k,\widehat{H})$ denote the kernel of the localization map
$$\loc^1_{S^\complement} = \loc^1_{\cV(k) \smallsetminus S}\colon H^1(k,\widehat{H}) \to \prod \limits_{v \in \cV(k)\smallsetminus S}H^1(k_v,\widehat{H})$$
where $\cV(k)$ is the set of all places of $k$.
As a byproduct of our study of the group $\Ch^1(k, H^{\mult})$
we  obtain a duality theorem:

\begin{thm}[Theorem ~\ref{t:dual_t}]
\label{t:dual_intro}
Let $H$ be a $k$-group of multiplicative type over a number field $k$.
There is a canonical non-degenerate pairing
$$
\Sha_S^1(k,\widehat{H})/\Sha_{\emptyset}^1(k,\widehat{H}) \times
\Ch^1_S(k,H) \xrightarrow{\cup_S} \QQ / \ZZ.
$$
\end{thm}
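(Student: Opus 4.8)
The plan is to build the pairing $\cup_S$ out of local Tate duality and to establish its non-degeneracy by combining the global reciprocity law with the exactness of the Poitou--Tate sequence for groups of multiplicative type. Throughout I use that for each place $v$ the cup-product
$$\langle\,\cdot\,,\,\cdot\,\rangle_v\colon H^1(k_v,\widehat{H}) \times H^1(k_v,H) \to H^2(k_v,\Gm) = \QQ/\ZZ$$
is a perfect pairing of finite groups (local duality), and that all the groups $H^1(k_v,H)$, $H^1(k_v,\widehat H)$, $\Ch^1_S(k,H)$ and the $\Sha$-groups in sight are finite. The pairing is then defined as follows: for $b \in \Sha_S^1(k,\widehat H)$ and a class in $\Ch^1_S(k,H)$ represented by $a_S = (a_v)_{v\in S} \in H^1(k_S,H) = \prod_{v\in S}H^1(k_v,H)$, set
$$b \cup_S a_S = \sum_{v \in S}\langle \loc_v(b),\, a_v\rangle_v \in \QQ/\ZZ.$$
I then check well-definedness. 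In the first variable: if $b \in \Sha^1_\emptyset(k,\widehat H)$ then $\loc_v(b)=0$ for every $v$, so the sum vanishes and $\cup_S$ descends to $\Sha^1_S/\Sha^1_\emptyset$. In the second variable: if $a_S=\loc_S(a)$ for some $a\in H^1(k,H)$, then, since $\loc_v(b)=0$ for $v\notin S$ (as $b\in\Sha^1_S$), the sum over $S$ equals the sum $\sum_{v}\inv_v\big((\loc_v b)\cup(\loc_v a)\big)$ over all places, which is $0$ by the reciprocity law $\sum_v\inv_v=0$; hence $\cup_S$ factors through $\Coker(\loc_S)=\Ch^1_S(k,H)$.

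Next I prove that the induced map $\lambda\colon \Sha^1_S/\Sha^1_\emptyset \to \Ch^1_S(k,H)^D$ is injective. Suppose $b \in \Sha^1_S(k,\widehat H)$ satisfies $b\cup_S a_S = 0$ for all $a_S$. As $H^1(k_S,H)=\prod_{v\in S}H^1(k_v,H)$, I may vary each $a_v$ independently, so $\langle\loc_v(b),a_v\rangle_v = 0$ for every $a_v\in H^1(k_v,H)$ and every $v\in S$. Perfectness of $\langle\,\cdot\,,\,\cdot\,\rangle_v$ forces $\loc_v(b)=0$ for all $v\in S$; combined with local triviality outside $S$ this gives $b\in\Sha^1_\emptyset$, as required.

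For surjectivity of $\lambda$ I invoke global duality. Dualizing the presentation $H^1(k,H)\xrightarrow{\loc_S}H^1(k_S,H)\to\Ch^1_S(k,H)\to 0$ and using $H^1(k_S,H)^D=\prod_{v\in S}H^1(k_v,\widehat H)$, a functional on $\Ch^1_S(k,H)$ is the same datum as a family $(\beta_v)_{v\in S}\in\prod_{v\in S}H^1(k_v,\widehat H)$ with $\sum_{v\in S}\langle\beta_v,\loc_v(a)\rangle_v = 0$ for all $a\in H^1(k,H)$. I extend it by zero outside $S$ to an element $\tilde\beta$ of the restricted product $\prod'_v H^1(k_v,\widehat H)$. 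The orthogonality condition says precisely that the image of $\tilde\beta$ under the Poitou--Tate map $\prod'_v H^1(k_v,\widehat H)\to H^1(k,H)^D$, $(\beta_v)\mapsto\big[a\mapsto\sum_v\langle\beta_v,\loc_v(a)\rangle_v\big]$, is zero. By exactness of the Poitou--Tate sequence for $\widehat H$ at this term, $\tilde\beta$ lies in the image of localization, so there is $b\in H^1(k,\widehat H)$ with $\loc_v(b)=\beta_v$ for $v\in S$ and $\loc_v(b)=0$ for $v\notin S$. The latter means $b\in\Sha^1_S(k,\widehat H)$, and $b$ represents the given functional; thus $\lambda$ is surjective. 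Since all groups are finite, $\lambda$ is then an isomorphism, and an isomorphism onto the dual is equivalent to non-degeneracy of $\cup_S$ on both sides.

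The main obstacle is the global-duality input: I need the Poitou--Tate exact sequence, and specifically its exactness at $\prod'_v H^1(k_v,\widehat H)$, for a group of multiplicative type rather than a finite module. I would obtain this either by citing the arithmetic duality theorems in this generality or by reducing to the finite and torus cases through a resolution of $\widehat H$ (equivalently of $H$) by permutation lattices, verifying compatibility of the cup-product pairings with the connecting homomorphisms. The remaining care concerns the archimedean places, where the local groups and the local pairing must be understood in the Tate-modified sense; this affects none of the formal steps above.
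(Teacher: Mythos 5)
Your proposal is correct, but it takes a genuinely different route from the paper's own proof. The paper deduces Theorem \ref{t:dual_intro} from its torus computation: it embeds $H$ into a quasi-trivial torus $G$ (possible since $\widehat{H}$ is a quotient of a permutation module), sets $T=G/H$, and applies Lemma \ref{l:h2_h1_fin}, where connecting homomorphisms identify $\Sha^1_{S,\emptyset}(\widehat{H})$ with $\Sha^2_{S,\emptyset}(\widehat{T})$ and $\Ch^1_S(k,H)$ with $T(k_S)/\overline{T(k)}$ (using weak approximation for quasi-trivial tori and Sansuc's vanishing $\Sha^2_S(\widehat{Q})=0$), and perfectness is then reduced via Ono's lemma $1\to B\to Q_1\to T^m\times Q_2\to 1$ to the case of a \emph{finite} module, where it is Sansuc's Lemma 1.4. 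You instead argue purely Galois-cohomologically: local Tate duality for multiplicative-type groups gives well-definedness (via the reciprocity law, correctly using $\loc_v b=0$ outside $S$) and injectivity of $\lambda$ (varying the local components independently), while surjectivity comes from extending a functional by zero and invoking exactness of the Poitou--Tate sequence at the restricted product $\prod'_v H^1(k_v,\widehat{H})$; your finiteness claims also hold, since $\Sha^1_{S,\emptyset}(\widehat{H})$ injects into the finite group $\prod_{v\in S}H^1(k_v,\widehat{H})$. The one input you defer to citation --- the Poitou--Tate sequence for groups of multiplicative type rather than finite modules --- is genuinely needed and is available, e.g.\ \cite{Demarche-Suites}, Theorem 6.3; indeed the paper itself remarks in the introduction that the theorem ``can be also deduced'' this way, so your proposal essentially realizes the alternative proof the authors flag. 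What the paper's longer route buys is that the same diagram simultaneously identifies the abstract pairing with the Manin pairing on $T$ and produces the map $c_S$, which the main theorem requires; your route is shorter and more self-contained if the duality statement alone is the goal, at the cost of heavier global-duality machinery.
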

Theorem ~\ref{t:dual_intro} generalizes a result of Sansuc \cite{Sansuc}, Lemma 1.4, who considered the case of finite $H$.
This theorem  can be also deduced from the Poitou-Tate exact sequence for groups of multiplicative type,
see \cite[Thm.~6.3]{Demarche-Suites}.

\bigskip

\noindent\emph{Acknowledgements.}
The authors are grateful to David Harari for useful discussions and to the referee for helpful comments.
The first-named author worked on the paper while visiting the Max-Planck-Institut f\"ur Mathematik, Bonn;
he thanks the Institute for hospitality, support, and excellent working conditions.
\bigskip

\noindent\emph{Notation.}

By $k$ we denote a field of characteristic 0, and by $\kbar$ a fixed algebraic closure of $k$.
By a $k$-variety we mean a separated geometrically integral scheme of finite type over $k$.
For a $k$-variety $X$
we set $\Xbar=X\times_k \kbar$ and
$$
U(\Xbar)=\kbar[X]^*/\kbar^*,
$$
where $\kbar[X]$ is the ring of regular functions on $\Xbar$ and $\kbar[X]^*$
is the group of invertible regular functions on $\Xbar$.
We denote by $\Pic \Xbar$ the Picard group of $\Xbar$.
Note that both $U(\Xbar)$ and $\Pic \Xbar$ are Galois modules,
i.e. the Galois group $\Gal(\kbar/k)$ acts on them.

By an algebraic $k$-group $H$ we mean a \emph{linear} algebraic group (not necessarily connected).
We write $\widehat{H}$ or $\XX(H)$ for the (geometric) character group of $H$, i.e.
$$
\XX(H)=\widehat{H}:=\Hom_\kbar(\overline{H},\G_{m,\kbar}).
$$

When $k$ is assumed to be a number field, we write $\sV(k)$ for the set of places of $k$.
If $v\in\sV(k)$, we write $k_v$ for the completion of $k$ at $v$.
Let $S\subset \sV(k)$ be a finite set of places of $k$.
We set $k_S=\prod_{v\in S}k_v$, then for a $k$-variety $X$ we have
$$
X(k_S)=\prod_{v\in S} X(k_v).
$$
The set of $k$-points $X(k)$ embeds diagonally into $X(k_S)$, and
we denote by $\overline{X(k)}$ the closure of $X(k)$ in $X(k_S)$.
If $G$ is a  linear algebraic group over $k$, we set
$$
H^1(k_S, G):=\prod_{v\in S} H^1(k_v,G).
$$

\section{Preliminaries on the Brauer group and  the Brauer-Manin obstruction}\label{sec:prelim-Brauer}
Let $k$ be a field of characteristic 0.
Let $X$ be  a smooth geometrically integral $k$-variety with a marked $k$-point $x^0$.
Then $\Br X:= H^2_{\text{\'et}}(X, \Gm)$ is the cohomological Brauer group of $X$.
We  use the following notation:
\begin{gather*}
\Brz X =\im[\Br k\to \Br X];\quad
\Brn X=\Ker[\Br X\to \Br X_\kb];\\
\Brs_{x^0}X  = \Ker [(x^0)^*\colon \Brn X\to\Br k];\quad \Bra X=\Brn X/\Brz X.
\end{gather*}
We have
$$\Brn X = \Brz X \oplus \Brs_{x^0}X,$$
and therefore we have a canonical isomorphism $\Brs_{x^0} X\isoto\Bra X$.

If $S \subset \cV(k)$ is a finite subset, let  $\Bcyr_S(X)$ be the subgroup of $\Brs_{x^0} X$ consisting of
elements $b$ whose localizations $\loc_v b$
in $\Br X_{k_v}$ are trivial for all
places $v$ of $k$ outside $S$.
We set $\Be(X)=\Be_\emptyset(X)$.

Note that  for any $S$ we have $\Be_{\emptyset}(X) \subset \Be_{S}(X)$.
We denote
$$\Be_{S,\emptyset}(X): = \Be_{S}(X) / \Be_{\emptyset}(X)=\Be_{S}(X) / \Be(X).$$

Now we describe the Brauer-Manin obstruction of \cite{Bor96} to weak approximation in $S$ for $X$.

For our purposes the Brauer-Manin obstruction coming from the subgroup $\Brn X$
of $\Br X$ will suffice.
Following \cite{Sansuc} and \cite{Bor96}, we define the Brauer--Manin obstruction in terms
of the group  $\Be_{S,\emptyset}(X)$.

Let $X$ be a smooth geometrically integral variety over a field $k$ of characteristic 0.
Consider the pairing
$$ X(k)\times \Brn X\xrightarrow{\ev} \Br k $$
where $\ev$ is the evaluation map $\ev\colon (x,b)\mapsto b(x)$.
This pairing is additive in $b$:
$$ \ev(x,b+b')=\ev(x,b)+\ev(x,b').$$
If $k$ is a local field,
then the above pairing
gives us a pairing
$$ X(k)\times \Brn X \xrightarrow{\inv\, \circ\, \ev} \QQ/\ZZ,$$
$$ (x,b)\mapsto \inv (b(x)) $$
where $\inv\colon \Br k \to \QQ/\ZZ$ is the homomorphism of local class field theory.
This pairing is continuous in $x$ (see \cite{Sansuc}, Lemma 6.2, or \cite{BD}, Lemma 6.2)
and is additive in $b$.

Now let $k$ be a number field.
Let $S\subset\cV(k)$ be a finite subset.
Consider the pairing
\begin{gather}\label{e:def_br_obs}
\langle\,,\,\rangle_S\colon X(k_S)\times \Bcyr_S(X)\to \QQ / \ZZ,\\
\langle(x_v)_{v\in S}, b\rangle_S = \sum_{v\in S} \big(\inv_v (b(x_v))\big).\notag
\end{gather}
If $b\in \Be_\emptyset(X)$, then $\langle x_S,b \rangle_S=0$ for any $x_S\in X(k_S)$.
Thus the pairing $\langle\,,\,\rangle_S$ induces a pairing
\be\label{e:manin2}
X(k_S)\times \Be_S(X)/\Be_\emptyset(X)\to \QQ / \ZZ\;,
\ee
which is additive in the second argument.
We call the pairings \eqref{e:def_br_obs} and \eqref{e:manin2} the {\em Manin pairings}.
Thus we obtain a map
$$\mWS\colon X(k_S)\to (\Be_S(X)/\Be_\emptyset(X))^D= \Be_{S,\emptyset}(X)^D\;.$$
The map $\mWS$ is continuous because the pairing $\langle\,,\,\rangle_S$ is continuous
in $x_S$.
Further, by the Hasse-Brauer-Noether theorem if $x\in X(k)\subset X(k_S)$, then $\mWS(x)=0$.
It follows that if $x_S$ is contained in the closure $\overline{X(k)}$
of $X(k)$ in $X(k_S)$, then $\mWS(x_S)=0$.

Recall  that $X$ has the weak approximation property in $S$,
if $X(k)$ is dense in $X(k_S)$.
If $X$ has the weak approximation property in $S$, then $\overline{X(k)}=X(k_S)$
and  $\mWS$ is identically 0.
We see therefore that $\mWS$ is an obstruction to weak approximation in $S$.
We call $\mWS$
{\em the Brauer--Manin obstruction to weak approximation in $S$,
associated with $\Be_{S,\emptyset}$.}

The obstruction $\mWS$ is functorial. Namely,
let $\pi\colon (X, x^0)\to (Y,y^0)$ be a morphism of $k$-varieties with marked $k$-points.
Then the following diagram is commutative:
$$
\xymatrix{
X(k_S) \ar[d]^{\pi} \ar[r]^-{\mWS}  &\Be_{S,\emptyset}(X)^D \ar[d]^{\pi_*} \\
Y(k_S) \ar[r]^-{\mWS}  &\Be_{S,\emptyset}(Y)^D }
$$
where $\pi_*$ is the homomorphism induced by $\pi$.

\begin{rem}
In \cite{Bor96} the group $\Be_S(X)$ and the Brauer-Manin obstruction were defined in terms of $\Bra X=\Brn X/\Brz X$
rather than $\Brs_{x^0} X$.
However, these two groups are canonically  isomorphic,
and one can check that
our definitions here are essentially equivalent to those of \cite{Bor96}.
Note that
though we defined the obstruction $\mWS$ using $x^0$,  this obstruction essentially
does not depend on $x^0$, cf. \cite{Bor96}, \S1.
\end{rem}

\section{Preliminaries on quasi-trivial groups}\label{s:category}

\def\pn{\par\noindent}

All the lemmas in this section are well known.
For the reader's convenience we provide short proofs and/or references.

\begin{subsec}\label{subsec:groups}
{\em Notation concerning  linear algebraic groups.}
Let $G$ be a connected linear algebraic group
over a field $k$ of characteristic 0.
We use the following notation:
\pn $G^\uu$ is the unipotent radical of $G$;
\pn $G^\red=G/G^\uu$, it is a reductive $k$-group;
\pn $G^\sss$ is the commutator subgroup of $G^\red$,
it is a semisimple $k$-group;
\pn $G^\tor=G^\red/G^\sss$, it is a $k$-torus.

Let  $H$ be a linear $k$-group, not necessarily connected.
We denote by $H^\mult$ the biggest quotient group of $H$ that is a group of multiplicative type.
Let $\widehat{H}$ denote the character group of $H$. We have $\widehat{H}=\widehat{H^\mult}$.
If $H$ is connected, then $H^\mult=H^\tor$.
\end{subsec}

Recall that a torus $T$ is called quasi-trivial if its character group $\widehat{T}$ is a permutation Galois module.

\begin{defn}[\cite{CT06}, Proposition 2.2]\label{def:QT_grp}
Let $k$ be a field and $G$ a connected linear $k$-group. We say that $G$ is \emph{quasi-trivial}
if $G^{\tor}$ is quasi-trivial and $G^{\sss}$ is simply connected.
\end{defn}

Let $X$ be a homogeneous space of a connected linear $k$-group $G$.
By virtue of the following lemma one can always take $G$ to be quasi-trivial without changing $X$.

\begin{lem}\label{l:QT_not_resrict}
Let $G$ be a connected linear algebraic group over a field $k$  of characteristic 0, then there exists
a surjective homomorphism $G'\to G$ such that $G'$ is a quasi-trivial $k$-group.
\end{lem}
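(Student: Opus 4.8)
The plan is to reduce to the reductive case and then repair the torus part and the semisimple part separately, each time by a fibre-product construction in the category of algebraic groups.

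First I would dispose of the unipotent radical. Since $k$ has characteristic $0$, the group $G$ admits a Levi decomposition $G=L\ltimes G^{\uu}$, where $L\isoto G^{\red}$ is a reductive Levi subgroup. Assuming the reductive case is already settled, we have a surjection $f\colon M\to L$ with $M$ reductive and quasi-trivial. Let $M$ act on $G^{\uu}$ through $f$ followed by the conjugation action of $L$ on $G^{\uu}$, and set $G'=M\ltimes G^{\uu}$. Then $(m,u)\mapsto(f(m),u)$ is a surjective homomorphism $G'\to G$, and $G^{\uu}$ is a normal unipotent subgroup of $G'$ with reductive quotient $M$, hence it is the unipotent radical of $G'$ and $(G')^{\red}\isoto M$. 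Therefore $(G')^{\tor}=M^{\tor}$ is quasi-trivial and $(G')^{\sss}=M^{\sss}$ is simply connected, so $G'$ is quasi-trivial. Thus it suffices to treat reductive $G$.

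Assume now $G$ is reductive, with commutator subgroup $G^{\sss}$ and torus quotient $q\colon G\to G^{\tor}$. I would first recall the standard fact that every $\Gal(\kbar/k)$-lattice embeds into a permutation lattice with torsion-free cokernel; dualizing an embedding $\widehat{G^{\tor}}\into P$ into a permutation lattice produces a surjection $p\colon Q\to G^{\tor}$ from a quasi-trivial torus $Q$ whose kernel is again a torus. Form the fibre product $\tilde G=G\times_{G^{\tor}}Q$. The first projection $\tilde G\to G$ is surjective with central kernel $\ker p$, a torus, so $\tilde G$ is connected reductive and maps onto $G$; the second projection yields an exact sequence with kernel $G^{\sss}$ and quotient $Q$, whence $\tilde G^{\sss}=G^{\sss}$ and $\tilde G^{\tor}=Q$ is quasi-trivial. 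This repairs the torus part.

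It remains to replace the semisimple part $G^{\sss}$ by its simply connected cover $\rho\colon G\sssc\to G^{\sss}$. Let $G\ad=G^{\sss}/Z(G^{\sss})$ be the adjoint group. Both $G\sssc$ and $\tilde G$ map onto $G\ad$, the latter through the conjugation action on $G^{\sss}$, and I would set $G'=G\sssc\times_{G\ad}\tilde G$. The projection $G'\to\tilde G$ is surjective with finite central kernel $Z(G\sssc)$, so $G'\to\tilde G\to G$ is surjective; since $Z(G\sssc)\subset (G')^{\sss}$ is killed in the torus quotient, one gets $(G')^{\tor}\isoto\tilde G^{\tor}=Q$, still quasi-trivial. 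The projection $G'\to G\sssc$ is surjective with central kernel of multiplicative type, and the restriction $(G')^{\sss}\to G\sssc$ is a central isogeny, hence an isomorphism because $G\sssc$ is simply connected; thus $(G')^{\sss}=G\sssc$ is simply connected. Hence $G'$ is quasi-trivial and surjects onto $G$, completing the reduction.

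The main obstacle is the last step. One must verify that the fibre product $G'$ is connected: the kernel of $\tilde G\to G\ad$ is the centralizer $Z(G^{\sss})\cdot R$ of $G^{\sss}$ in $\tilde G$ (with $R$ the radical of $\tilde G$), which may be disconnected, so a priori one passes to the identity component $(G')^{\circ}$ and checks it still surjects onto $\tilde G$ (its image is a connected normal closed subgroup of finite index, hence all of $\tilde G$). One then has to confirm that the finite central subgroups match up, so that $(G')^{\sss}\to G\sssc$ is genuinely an isomorphism while the passage through the adjoint group leaves the torus quotient unchanged. Everything else is a routine diagram chase with central extensions of connected reductive groups.
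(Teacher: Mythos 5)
Your reduction to the reductive case via a Levi decomposition is correct, and so is your torus-repair step (including the lattice fact you invoke). The genuine gap is in the last step, at precisely the point you flagged and then dismissed as routine: passing to the identity component of $G'=G^{\mathrm{sc}}\times_{G^{\mathrm{ad}}}\tilde G$ does \emph{not} leave the torus quotient unchanged. Write $R$ for the radical of $\tilde G$ and $\rho\colon G^{\mathrm{sc}}\to G^{\sss}\subset\tilde G$ for the canonical map. The subgroup $W=\{(s,\rho(s)r): s\in G^{\mathrm{sc}},\ r\in R\}$ is the image of the injective homomorphism $(s,r)\mapsto (s,\rho(s)r)$ (injective and a homomorphism because $R$ is central), it is closed and connected, and it has finite index in $G'$: any $(s,\tilde g)\in G'$ satisfies $\tilde g\,\rho(s)^{-1}\in\ker[\tilde G\to G^{\mathrm{ad}}]=Z(\tilde G)$, and $Z(\tilde G)/R$ is finite. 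Hence $(G')^{\circ}=W\cong G^{\mathrm{sc}}\times R$, so $((G')^{\circ})^{\tor}\cong R$, and the map to $\tilde G^{\tor}=Q$ induced by $(G')^{\circ}\to\tilde G$ is the isogeny $R\to R/(R\cap G^{\sss})$, an isomorphism only when $R\cap G^{\sss}=1$; your claim that the central subgroup is ``killed in the torus quotient'' fails because after shrinking to $(G')^{\circ}$ the kernel sits antidiagonally, not inside the derived group. Since $R$ is merely isogenous to $Q$, quasi-triviality is lost in general. Concretely: let $L/k$ be quadratic, $T_1=R^1_{L/k}\mathbb{G}_m$, and let $G$ be the quotient of $\mathrm{SL}_4/\mu_2\times\mathbb{G}_m\times T_1$ by a diagonally embedded central $\mu_2$ (using $Z(\mathrm{SL}_4/\mu_2)\cong\mu_2$ and $\mu_2\subset\mathbb{G}_m$, $\mu_2\subset T_1$). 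Then $G^{\tor}\cong(\mathbb{G}_m\times T_1)/\mu_2\cong R_{L/k}\mathbb{G}_m$ is already quasi-trivial (its character lattice $\{ma+nb: m+n\ \text{even}\}$, where $\sigma a=a$, $\sigma b=-b$, is permutation on the basis $a\pm b$), so one may take $\tilde G=G$ in your step 2; but $G^{\sss}\cong\mathrm{SL}_4/\mu_2$ is not simply connected, $R\cap G^{\sss}\cong\mu_2\neq 1$, and your step 3 outputs $(G')^{\circ}\cong\mathrm{SL}_4\times\mathbb{G}_m\times T_1$, whose torus quotient $\mathbb{G}_m\times T_1$ is not quasi-trivial since $H^1(\Gal(L/k),\widehat{T_1})\neq 0$ while permutation modules are $H^1$-trivial.

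The gap is one of ordering, and your own computations show how to repair it: since your torus-repair step leaves the derived group unchanged, do the simply connected step \emph{first} and the torus step \emph{second} (or rerun the torus step at the end). Better still, note that your fibre product, after taking identity components, is nothing but the multiplication map $G^{\mathrm{sc}}\times R(G)\to G$, $(s,r)\mapsto\rho(s)r$, which for connected reductive $G$ is already a surjective homomorphism; choosing a surjection $p\colon Q\to R(G)$ from a quasi-trivial torus then makes $G^{\mathrm{sc}}\times Q\to G$, $(s,q)\mapsto\rho(s)p(q)$, a surjection from a quasi-trivial group in one stroke — the classical $z$-extension construction — and combined with your Levi step this proves the lemma. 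For comparison, the paper gives no argument at all but cites Colliot-Th\'el\`ene \cite{CT06}, Proposition-Definition 3.1, whose construction is essentially this $z$-extension (refined there so that the kernel is a flasque torus, which the present lemma does not need).
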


\begin{proof}
See \cite{CT06}, Proposition-Definition 3.1.
\end{proof}

\begin{subsec} \label{subsec:pairs-of-groups}
 By a pair of $k$-groups we mean  a pair $(G,H)$, where $G$ is a \emph{quasi-trivial} $k$-group
 and $H$ is a $k$-subgroup of $G$ (not necessarily connected).
 A pair $(G,H)$ defines a homogeneous space $X:=G/H$ together with a marked point $x^0=eH\in X(k)$,
 where $e\in G(k)$ is the identity element of $G$.

 By a morphism of pairs $\phi\colon (G_1,H_1)\to (G_2,H_2)$ we mean a {\em surjective} homomorphism $\phi\colon G_1\to G_2$
 such that $\phi(H_1)=H_2$.
 If we set $X_1=G_1/H_1$ and $X_2=G_2/H_2$, then we have an induced   morphism $\phi_*\colon (X_1, x^0_1)\to (X_2,x^0_2)$,
 where $x^0_1$ and $x^0_2$ are the corresponding marked points.
\end{subsec}

Let $G$ be a $k$-group. In the next lemma we  use the notation $\sX(G) = \widehat{G}$
to denote the Galois module of (geometric) characters of $G$.

\begin{prop}\label{l:H_k_pic_u}
Let $(G,H)$ be a pair of $k$-groups with quasi-trivial $G$ as in \ref{subsec:pairs-of-groups}, and let $X=G/H$.
Let $\overline{X} = X\times_{k}\bar{k}$. Consider the natural morphism of Galois modules
$$ \sX(G) \to \sX(H) $$
 and the dual morphism of $k$-groups of multiplicative type
$$ H^{\mult} \to G^{\mult} = G^{\tor}$$
Then there are natural isomorphisms of Galois modules
\begin{enumerate}
\item\label{l:H_k_pic_u:it:U} $U(\overline{X}) \cong \Ker[\sX(G) \to \sX(H)] \cong \sX(\Coker[H^{\mult} \to G^{\tor}])$
\item\label{l:H_k_pic_u:it:Pic} $\Pic \overline{X} \cong \Coker[\sX(G) \to \sX(H)] \cong \sX(\Ker[H^{\mult} \to G^{\tor}])$
\end{enumerate}
\end{prop}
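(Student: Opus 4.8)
The plan is to apply the Colliot-Th\'el\`ene--Sansuc exact sequence attached to the torsor $\pi\colon G\to X=G/H$ under $H$, and then to translate its output into groups of multiplicative type by Cartier duality. Throughout, $U$, $\Pic$ and $\sX$ are computed over $\overline k$ and carry the natural action of $\Gal(\overline k/k)$; the maps below are Galois-equivariant because they are induced functorially by $\pi$ and by the inclusion $H\subset G$, so it suffices to produce the isomorphisms compatibly with this action.

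First I would recall the two standard facts about a connected linear group. By Rosenlicht's theorem every invertible regular function on $\overline G$ is a constant times a character, so $U(\overline G)\cong\sX(G)$, and the homomorphism induced by $\pi$ on units is exactly the restriction of characters $\sX(G)\to\sX(H)$. Since $G$ is quasi-trivial, $G^{\sss}$ is simply connected, and this forces $\Pic\overline G=0$: the unipotent radical contributes nothing (it is $\overline k$-isomorphic to affine space, so $\Pic\overline G\cong\Pic\overline{G^{\red}}$), and the Picard group of a connected reductive group is the character group of the fundamental group of its semisimple part, which is trivial here.

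Next I would invoke the exact sequence of Galois modules \cite[Prop.~6.10]{Sansuc}
\[
0\longrightarrow U(\overline X)\longrightarrow U(\overline G)\longrightarrow \sX(H)\longrightarrow \Pic\overline X\longrightarrow \Pic\overline G .
\]
Substituting $U(\overline G)=\sX(G)$ and $\Pic\overline G=0$, and identifying the middle arrow with the restriction of characters, this collapses to the short exact sequence
\[
0\longrightarrow U(\overline X)\longrightarrow \sX(G)\longrightarrow \sX(H)\longrightarrow \Pic\overline X\longrightarrow 0 .
\]
Reading off the two ends gives at once $U(\overline X)\cong\Ker[\sX(G)\to\sX(H)]$ and $\Pic\overline X\cong\Coker[\sX(G)\to\sX(H)]$, which are the first isomorphisms in \eqref{l:H_k_pic_u:it:U} and \eqref{l:H_k_pic_u:it:Pic}. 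Finally I would obtain the remaining isomorphisms by duality: since $G$ is connected, $\sX(G)=\widehat{G^{\tor}}$ and $\sX(H)=\widehat{H^{\mult}}$, so the restriction map is precisely the dual $\widehat f$ of the morphism $f\colon H^{\mult}\to G^{\tor}$ from \ref{subsec:pairs-of-groups}. The character functor $\sX=\widehat{(\,\cdot\,)}$ is an exact anti-equivalence between $k$-groups of multiplicative type and finitely generated Galois modules, hence interchanges kernels and cokernels, giving $\Ker\widehat f=\sX(\Coker f)$ and $\Coker\widehat f=\sX(\Ker f)$; combining with the previous step yields the asserted descriptions.

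The step I expect to require the most care is the input $\Pic\overline G=0$: it is exactly here that simple connectedness of $G^{\sss}$ (part of quasi-triviality) is used, and one must also check that Sansuc's sequence is available in Galois-equivariant form over the non-closed field $k$, not merely over $\overline k$. Once these two points are secured, the remainder is the formal bookkeeping of the four-term exact sequence together with the character anti-equivalence.
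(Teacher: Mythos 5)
Your write-up follows essentially the same skeleton as the paper's proof — Rosenlicht's theorem identifying $U(\overline{G})\cong\sX(G)$, the vanishing $\Pic \overline{G}=0$ coming from quasi-triviality (cf.\ \cite{CT06}, Definition 2.1 and Proposition 2.2), and the exact anti-equivalence $\sX$ between groups of multiplicative type and finitely generated Galois modules to convert $\Ker$ and $\Coker$ of $\sX(G)\to\sX(H)$ into $\sX(\Coker[H^{\mult}\to G^{\tor}])$ and $\sX(\Ker[H^{\mult}\to G^{\tor}])$, using $\sX(G)=\widehat{G^{\tor}}$ and $\sX(H)=\widehat{H^{\mult}}$. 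All of that is fine. The difference is the engine: the paper proves (i) directly (the embedding $U(\overline{X})\hookrightarrow U(\overline{G})$ induced by $G\to X$, with a character coming from $U(\overline{X})$ exactly when it restricts trivially to $H$) and for (ii) cites Popov \cite{Popov}, Theorem 4, whereas you invoke Sansuc's exact sequence for the torsor $G\to X$ under $H$.

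That substitution contains the one genuine gap: Sansuc's Proposition 6.10 is stated (and proved) for torsors under a \emph{connected} linear group, while here the structure group is $H$, which is allowed to be disconnected — indeed the disconnected case (e.g.\ finite stabilizers) is the main point of this paper. The connectedness hypothesis is not cosmetic: for a torsor under a disconnected group the total space may be geometrically disconnected, and then the sequence fails already at $U(Y)$ (take $X=\Spec k$ and $Y=H$ a nontrivial finite group: $U(\overline{Y})\cong(\kbar^*)^{\,n-1}$ is far larger than $\widehat{H}$, so exactness of $U(\overline{X})\to U(\overline{Y})\to\widehat{H}$ breaks). In your situation the conclusion does hold, because the total space is the connected group $G$ and the torsor is the quotient map — but that is precisely the content of Popov's theorem, not of Sansuc's proposition, so as written your key input is applied outside its hypotheses. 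The repair is simply to cite \cite{Popov}, Theorem 4 (valid for an arbitrary closed subgroup $H$ of a connected $G$, and applied Galois-equivariantly exactly as you indicate), or to prove the four-term sequence directly for this particular torsor using geometric integrality of $G$; with that replacement your argument is complete. Your other flagged worry, Galois-equivariance over the non-closed field, is handled as you say by functoriality and is not an issue.
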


\begin{proof}
We have canonical isomorphisms
 $$\Ker[\sX(G) \to \sX(H)] \cong \sX(\Coker[H^{\mult} \to G^{\tor}])$$
and
 $$\Coker[\sX(G) \to \sX(H)] \cong \sX(\Ker[H^{\mult} \to G^{\tor}]).$$

Proof of \eqref{l:H_k_pic_u:it:U}.
The natural map $G\to G/H=X$ induces an embedding $U(\Xbar)\into U(\Gbar)$.
By Rosenlicht's theorem (\cite{Rosenlicht}, Theorem 3) the injection $\XX(G)\into \kbar[\Gbar]^*$
induces an isomorphism $\XX(G)\cong U(\Gbar)$.
It is easy to see that a character $\chi\in\XX(G)$ corresponds to an element of $U(\Gbar)$ coming from $U(\Xbar)$
if and only if $\chi\in \ker[\XX(G)\to\XX(H)]$.

Proof of \eqref{l:H_k_pic_u:it:Pic}. See Popov ~\cite{Popov}, Theorem 4
(we use that $\Pic \overline{G} = 0$, cf. \cite{CT06}, Definition 2.1).
\end{proof}

\section{The cohomological obstruction $c_S$}\label{s:c_S}
In this section,
using nonabelian Galois cohomology we  define a cohomological obstruction to weak approximation in $X$
at a finite set of places $S \subset \cV(k)$.
This obstruction  takes values in the group $\Ch^1_S(k,H^{\mult})$, and we  denote it by $c_S$.

Let $k$ be a number field.
Let $\bar{k}$ be a fixed algebraic closure of $k$.
Let $(G,H)$ be a pair of $k$-groups as in \ref{subsec:pairs-of-groups} over a number field $k$.

\begin{lem}\label{l:orbits}
Let S be a finite subset of $\cV(k)$,
$(G,H)$ be a pair of $k$-groups as in \ref{subsec:pairs-of-groups},
and $x_S \in X(k_S)$ be a $k_S$-point.
Then $x_S \in \overline{X(k)}$ if and only if the orbit $G(k_S). x_S$ of $x_S$ contains a $k$-point of $X$.
\end{lem}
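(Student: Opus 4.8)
The plan is to prove the two implications separately, relying on two inputs: a \emph{local} one, that each orbit $G(k_v)\ddot x_v$ is open in $X(k_v)$ for the $v$-adic topology, and a \emph{global} one, that the quasi-trivial group $G$ has the weak approximation property. Throughout I identify $X(k)$ with its diagonal image in $X(k_S)$, so that ``the orbit contains a $k$-point'' means $G(k_S)\ddot x_S\cap X(k)\neq\emptyset$. Since the $G$-action and the completions are taken componentwise, I will use that $G(k_S)\ddot x_S=\prod_{v\in S}G(k_v)\ddot x_v$ and that the action morphism $G\times X\to X$ induces a continuous map $G(k_S)\times X(k_S)\to X(k_S)$.

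For the implication $x_S\in\overline{X(k)}\Rightarrow$ the orbit contains a $k$-point, I would first record that each $G(k_v)\ddot x_v$ is open in $X(k_v)$. Indeed, as $X$ is homogeneous the orbit map $g\mapsto g\ddot x_v$ is a smooth surjective morphism $G_{k_v}\to X_{k_v}$ (in characteristic $0$ the stabilizer of $x_v$ is smooth and $X_{k_v}$ is the corresponding quotient), so by the implicit function theorem over the local field $k_v$ the induced map $G(k_v)\to X(k_v)$ is open, whence its image $G(k_v)\ddot x_v$ is open. Therefore $G(k_S)\ddot x_S=\prod_{v\in S}G(k_v)\ddot x_v$ is open in the product topology, and it is an open neighborhood of $x_S$. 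Because $x_S\in\overline{X(k)}$, this neighborhood meets $X(k)$, yielding $x\in X(k)$ with $x\in G(k_S)\ddot x_S$; this is the required $k$-point of the orbit.

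For the converse, suppose $x\in X(k)$ and $g\in G(k_S)$ satisfy $g\ddot x_S=x$, i.e. $x_S=g^{-1}\ddot x$. Here I would invoke that the quasi-trivial group $G$ satisfies weak approximation, so $G(k)$ is dense in $G(k_S)$; choose $g_n\in G(k)$ with $g_n\to g^{-1}$ in $G(k_S)$. By continuity of the action we get $g_n\ddot x\to g^{-1}\ddot x=x_S$, while each $g_n\ddot x$ lies in $X(k)$ since $g_n\in G(k)$ and $x\in X(k)$. Hence $x_S$ is a limit of $k$-points, so $x_S\in\overline{X(k)}$.

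The local analysis (openness of orbits) and the continuity of the action are routine, and the topological argument in the first implication is immediate once openness is in hand. The one genuinely nontrivial ingredient, and the step I expect to be the main obstacle, is the global input used in the converse: weak approximation for the quasi-trivial group $G$. This I would cite rather than reprove, noting that it follows by combining weak approximation for simply connected semisimple groups, for quasi-trivial tori, and for unipotent groups via the structure of $G$ (see \cite{CT06}, \cite{Sansuc}). Pinning down the precise reference for weak approximation of quasi-trivial groups, not any calculation, is the only real difficulty.
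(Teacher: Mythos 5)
Your proof is correct, and it coincides with what the paper relies on: the paper's own ``proof'' of this lemma is the single line ``This is well known, see \cite{Bor:Toulouse}, \S 2.1,'' and your two ingredients --- openness of each local orbit $G(k_v)\cdot x_v$ in $X(k_v)$ via smoothness of the orbit map in characteristic $0$ and the implicit function theorem, plus weak approximation for the quasi-trivial group $G$ in the converse direction --- are exactly the standard argument behind that citation. The precise reference you were seeking for weak approximation of quasi-trivial groups is \cite{CT06}, Proposition 9.2, which this very paper invokes for the same purpose in the proof of Lemma~\ref{l:iso_A_ch}.
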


\begin{proof}
This is well known, see e.g. \cite{Bor:Toulouse}, \S 2.1.
\end{proof}

We see from Lemma ~\ref{l:orbits} that
a point $x_S \in X(k_S)$ lies in the closure $\overline{X(k)}$ of $X(k)$ if and only if
its $G(k_S)$-orbit $G(k_S). x_S$ lies in in the image of the localization map
\be
\label{e:orbits} \loc_S\colon G(k)\backslash X(k)  \to  G(k_S)\backslash X(k_S).
\ee

\begin{subsec}\label{ss:cohomology} {\em Cohomological formulation.}
By \cite{Serre:CG}, I.5.5, Corollary 1 of Proposition 36, we have a canonical  bijection
\be\label{e:orb_coho_2} \tau_k \colon G(k)\backslash X(k) \labelto{\sim} \Ker \left[H^1(k,H) \to H^1(k,G)\right].\ee
For a finite set $S$ of places of $k$ we obtain a bijection
\be
\label{e:orb_coho_10} \tau_S=\prod_{v\in S}\tau_{k_v} \colon G(k_S)\backslash X(k_S) \to \Ker \left[H^1(k_S,H) \to H^1(k_S,G)\right]
\ee
We have a commutative diagram with bijective vertical arrows:
$$
\xymatrix{
G(k) \backslash X(k) \ar[d]^-{\tau_k}   \ar[r]^{\loc_S}
& G(k_S) \backslash X(k_S) \ar[d]^-{\tau_S}\\
\Ker[H^1(k,H)\to H^1(k,G)] \ar[r]^-{\loc_S} & \Ker[H^1(k_S,H) \to H^1(k_S,G)].
}
$$
An element $x_S \in X(k_S)$ is contained in $\overline{X(k)}$ if and only if $\tau_S(G(k_S).x_S)$ lies in the image of the map
$$
\Ker[H^1(k,H)\to H^1(k,G)] \xrightarrow{\loc_S}  \Ker[H^1(k_S,H) \to H^1(k_S,G)].
$$
\end{subsec}

\begin{subsec}\label{ss:c_S} {\em The definition of $c_S$.}
Consider the following commutative diagram:
\be\label{e:big_coho}
\xymatrix{
X(k) \ar[d]^\nu \ar@{^{(}->}[r] & X(k_S) \ar[d]^{\nu_S}\\
G(k) \backslash X(k) \ar[d]^-{\tau_k}   \ar[r]^{\loc_S}
& G(k_S) \backslash X(k_S) \ar[d]^-{\tau_{S}}\\
\Ker[H^1(k,H)\to H^1(k,G)] \ar[d]\ar[r]^-{\loc_S} &\Ker[H^1(k_S,H) \to H^1(k_S,G)]\ar[d]\\
H^1(k,H) \ar[d]^{\multmap}\ar[r]^-{\loc_S} &H^1(k_S,H)\ar[d]^{\multmap_S}\\
H^1(k,H^{\mult}) \ar[r]^-{\loc_S} &H^1(k_S,H^{\mult}).
}
\ee
By composing the arrows in the right column of diagram ~\eqref{e:big_coho} we obtain a
map
$$\tilde{c}_S \colon  X(k_S) \to H^1(k_S,H^{\mult}).$$
By composing $\tilde{c}_S$ with the canonical map
$$H^1(k_S,H^{\mult})\to\Ch^1_S(H^{\mult}),$$
we obtain a map
$$c_S \colon X(k_S) \xrightarrow{\tilde{c}_S} H^1(k_S,H^{\mult}) \to \Ch_S^1(H^{\mult}).$$
We  prove that the map $c_S$ is indeed an obstruction to weak approximation.
\end{subsec}

\begin{lem}\label{l:c_is_obs}
Let $c_S$ be the map defined above and let $x_S \in \overline{X(k)} \subset X(k_S)$.
Then $c_S(x_S) = 0$.
\end{lem}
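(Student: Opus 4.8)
The plan is to deduce the statement from a short diagram chase through \eqref{e:big_coho}, using the cohomological reformulation of membership in $\overline{X(k)}$ recorded in \ref{ss:cohomology}. First I would unwind the definitions. By construction $c_S(x_S)$ is the class of $\tilde c_S(x_S)$ in the cokernel $\Ch^1_S(H^{\mult})=\Coker[\loc_S\colon H^1(k,H^{\mult})\to H^1(k_S,H^{\mult})]$, while $\tilde c_S(x_S)$ is the image of $x_S$ under the composite of the right-hand vertical arrows of \eqref{e:big_coho}, that is, $\mu_S\bigl(\tau_S(\nu_S(x_S))\bigr)$, where $\mu_S$ denotes the right-hand map $H^1(k_S,H)\to H^1(k_S,H^{\mult})$. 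Thus it suffices to show that, for $x_S\in\overline{X(k)}$, the element $\tilde c_S(x_S)$ lies in the image of $\loc_S$ on $H^1(k_S,H^{\mult})$, since then its class in the cokernel is trivial.

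Next I would invoke the criterion of \ref{ss:cohomology}, which rests on Lemma \ref{l:orbits}: because $x_S\in\overline{X(k)}$, the class $\tau_S(\nu_S(x_S))\in\Ker[H^1(k_S,H)\to H^1(k_S,G)]$ lies in the image of the localization map from $\Ker[H^1(k,H)\to H^1(k,G)]$. Hence there is a global class $y\in\Ker[H^1(k,H)\to H^1(k,G)]\subset H^1(k,H)$ with $\loc_S(y)=\tau_S(\nu_S(x_S))$. Pushing $y$ forward under the left-hand map $\mu\colon H^1(k,H)\to H^1(k,H^{\mult})$ of \eqref{e:big_coho} gives $\mu(y)\in H^1(k,H^{\mult})$, and the commutativity of the bottom square of \eqref{e:big_coho} yields
$$
\loc_S\bigl(\mu(y)\bigr)=\mu_S\bigl(\loc_S(y)\bigr)=\mu_S\bigl(\tau_S(\nu_S(x_S))\bigr)=\tilde c_S(x_S).
$$
Therefore $\tilde c_S(x_S)\in\img\loc_S$, so its image $c_S(x_S)$ in $\Ch^1_S(H^{\mult})$ vanishes, which is exactly the assertion.

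There is no serious obstacle here; the content of the lemma lies entirely in the translation of the geometric condition $x_S\in\overline{X(k)}$ into cohomology, which has already been carried out in Lemma \ref{l:orbits} and \ref{ss:cohomology}, and what remains is a pure diagram chase. The one point deserving a word of care is that the upper part of \eqref{e:big_coho} consists of maps of pointed sets (nonabelian $H^1$), so "lies in the image" there must be read in the pointed-set sense; but once one descends to the abelian group $H^1(\,\cdot\,,H^{\mult})$ via $\mu$, the notions of image, cokernel, and the class $c_S(x_S)$ are honestly group-theoretic, and the computation above goes through verbatim.
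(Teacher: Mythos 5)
Your proof is correct and essentially matches the paper's: both arguments rest on translating the condition $x_S\in\overline{X(k)}$ into cohomology (Lemma \ref{l:orbits}, as packaged in \ref{ss:cohomology}) and then chasing the commutative diagram \eqref{e:big_coho}. The only cosmetic difference is that the paper first reduces to $x_S\in X(k)$ by observing that $c_S$ is constant on the open $G(k_S)$-orbits and hence locally constant, whereas you lift $\tau_S(\nu_S(x_S))$ to a global class $y$ directly and push it through $\mu$; the underlying content is identical.
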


\begin{proof}
First note that the map $$c_S\colon X(k_S) \to \Ch_S^1(H^{\mult})$$ is constant on $G(k_S)$-orbits.
Since the $G(k_S)$-orbits are open, the map $c_S$ is continuous, and therefore we may assume that
$x_S \in X(k)$. Now the assertion follows from the
commutativity of diagram ~\eqref{e:big_coho}.
\end{proof}

Note that every step in the definition of $c_S$ is functorial, and
therefore $c_S$ is functorial as well. Namely:
\begin{lem}\label{l:c_is_func}
Let $\phi\colon (G_1, H_1)\to (G_2, H_2)$ be a morphism of pairs as in \ref{subsec:pairs-of-groups}
 over a number field $k$ (with quasi-trivial groups $G_1$ and $G_2$).
Set $X_1=G_1/H_1$ and $X_2=G_2/H_2$,
and let $\phi_X\colon X_1\to X_2$ be the induced map.
Since
$\phi(H_1) = H_2$, $\phi$ induces a map $\phi^{\multmap}\colon
H^{\mult}_1 \to H^{\mult}_2$, and the following diagram commutes:
$$
\xymatrix{
X_1(k_S) \ar[d]^{c_S} \ar[r]^-{\phi_X}  &X_2(k_S)\ar[d]^{c_S}\\
\Ch_S^1(H_1^{\mult}) \ar[r]^-{\phi^{\multmap}_*} &\Ch_S^1(H_2^{\mult}) .
}
$$
\qed
\end{lem}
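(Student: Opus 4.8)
The plan is to verify commutativity of the square by tracing both composites through the definition of $c_S$, which was built in \S\ref{ss:c_S} as a sequence of maps in the right-hand column of diagram \eqref{e:big_coho}. Since $c_S$ is defined stepwise, the natural strategy is to establish that $\phi$ induces a morphism of the entire diagram \eqref{e:big_coho} for the pair $(G_1,H_1)$ to the corresponding diagram for $(G_2,H_2)$, and then read off the claim. First I would record the functoriality at each level. The morphism of pairs gives $\phi_X\colon X_1\to X_2$ sending $x^0_1$ to $x^0_2$, hence maps $X_1(k_S)\to X_2(k_S)$ and $X_1(k)\to X_2(k)$ compatible with the diagonal inclusions. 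Because $\phi\colon G_1\to G_2$ is surjective with $\phi(H_1)=H_2$, it carries $G_1(k_S)$-orbits into $G_2(k_S)$-orbits, so it descends to the quotient sets $G_i(k_S)\backslash X_i(k_S)$ (and similarly over $k$), and these descended maps are compatible with the bijections $\nu$, $\nu_S$.

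Next I would treat the cohomological rows. The identifications $\tau_k$ and $\tau_S$ of \eqref{e:orb_coho_2}--\eqref{e:orb_coho_10} are functorial in the pair $(G,H)$ by the naturality of the bijection in \cite[I.5.5, Cor.~1 of Prop.~36]{Serre:CG}: a morphism of pairs induces maps $H^1(k,H_1)\to H^1(k,H_2)$ and $H^1(k,G_1)\to H^1(k,G_2)$ commuting with $\tau$, and likewise over $k_S$. Restricting to the kernels of $H^1(-,H_i)\to H^1(-,G_i)$ is then automatic. Finally, since $\phi(H_1)=H_2$ and $H^{\mult}$ is the maximal multiplicative-type quotient, $\phi$ induces $\phi^{\multmap}\colon H_1^{\mult}\to H_2^{\mult}$ with $\multmap_2\circ\phi=\phi^{\multmap}\circ\multmap_1$, so the bottom squares of \eqref{e:big_coho} commute after applying $\phi$. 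Passing to the cokernel defining $\Ch^1_S$ and using that localization commutes with $\phi^{\multmap}_*$ yields the induced map $\phi^{\multmap}_*\colon\Ch^1_S(H_1^{\mult})\to\Ch^1_S(H_2^{\mult})$. Composing all the vertical arrows and invoking compatibility at each stage gives $c_S\circ\phi_X=\phi^{\multmap}_*\circ c_S$, which is the assertion.

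I expect the only genuinely delicate point to be the functoriality of $\tau_k$ on the kernel of $H^1(k,H)\to H^1(k,G)$: one must check that the nonabelian cohomology bijection identifying $G$-orbits of rational points with that kernel is natural with respect to a morphism of pairs, including that $\phi_X$ sends the orbit $G_1(k).x$ to $G_2(k).\phi_X(x)$ in a way that matches the pushforward on $H^1(k,H_1)\to H^1(k,H_2)$. Everything else is a formal diagram chase. Since $\phi^{\multmap}$ exists precisely because of the maximality defining $H^{\mult}$, and localization is manifestly natural, no further obstacle arises.
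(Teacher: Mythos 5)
Your proposal is correct and matches the paper's own treatment: the paper simply observes that ``every step in the definition of $c_S$ is functorial'' and states the lemma with no further proof, while you spell out exactly those steps (orbit maps, naturality of the bijection $\tau$ of \cite[I.5.5, Cor.~1 of Prop.~36]{Serre:CG}, the induced map $\phi^{\multmap}$ from the universal property of $H^{\mult}$, and naturality of localization and cokernels), correctly flagging the naturality of $\tau$ as the only point needing a cocycle-level check. One trivial slip: $\nu$ and $\nu_S$ in diagram \eqref{e:big_coho} are the quotient (projection) maps onto the orbit sets, not bijections, but this does not affect your argument.
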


\section{Proofs: the case of a torus}\label{s:torus}
In this section we prove our results in the special case when
our pair $(G,H)$ over $k$ is such that $G$ is
a quasi-trivial torus.
Note that in this case $H \subset G$ is a $k$-group of multiplicative type and
 $X = G/H$ has a canonical structure  of a $k$-torus.
 We write $T=G/H$ and denote by $e$ the neutral element of $T$.
 Then $e\in T(k)$ is our  marked point $x^0\in X(k)$.

Our main result in this special case is:
\begin{thm}\label{t:main_torus}
Let $(G,H)$ be a pair of $k$-groups over a number field $k$  such that $G$ is a quasi-trivial $k$-torus.
Set $T=G/H$.
Then $H = H^{\mult}$ is a $k$-group of multiplicative type, $T$ is a $k$-torus,
$T(k_S)$ and $\overline{T(k)} \subset T(k_S)$ are groups, and

(i) There are canonical isomorphisms
\begin{align*}
\beta_S\colon &\Sha^1_S(\widehat{H}) \isoto \Be_S(T),\\
\beta_{S,\emptyset} \colon &\Sha^1_{S,\emptyset}(\widehat{H}):=\Sha^1_S(\widehat{H}) /\Sha^1_\emptyset(\widehat{H})
\isoto \Be_{S,\emptyset}(T).
\end{align*}

(ii) We have a canonical commutative diagram
$$\label{e:tor_main}
\xymatrix{
T(k_S) / \overline{T(k)} \ar[d]^-{-c_S}_{\cong} \ar@{}[r]|{\times} & \Be_{S,\emptyset}(T) \ar[r]^-{\langle\,,\,\rangle_S} &\QQ / \ZZ \ar@{=}[d] \\
 \Ch^1_S(k,H) \ar@{}[r]|{\times} & \Sha^1_{S,\emptyset}(\widehat{H}) \ar[u]^{\beta_{S,\emptyset}}_{\cong} \ar[r]^-{\cup_S} &  \QQ / \ZZ,
}
$$
in which by abuse of notation we write
 $c_S\colon T(k_S) / \overline{T(k)}\to \Ch^1_S(k,H)$
for the map induced by the map $c_S$ defined in \S\ref{s:c_S},
and by abuse of notation again  we write $\langle\,,\,\rangle_S\colon T(k_S) / \overline{T(k)}\times\Be_{S,\emptyset}(T)\to\QQ/\ZZ$
for the pairing induced by the Manin pairing.
Moreover, in this diagram:
\begin{enumerate}
\item Both vertical arrows are isomorphisms of abelian groups.
\item Both pairings  are perfect pairings of finite abelian groups.
\end{enumerate}
\end{thm}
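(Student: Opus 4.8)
The plan is to translate everything into the Galois cohomology of the dual short exact sequence and then invoke Theorem~\ref{t:dual_intro}. Since $H$ is a $k$-subgroup of the torus $G$, it is of multiplicative type, so $H=H^{\mult}$, and the sequence $1\to H\to G\to T\to 1$ dualizes to $0\to\widehat{T}\to\widehat{G}\to\widehat{H}\to 0$ with $\widehat{G}$ a permutation module. Because $G$ is a quasi-trivial torus, Hilbert~90 and Shapiro's lemma give $H^1(k,G)=H^1(k_v,G)=0$ and $H^1(k,\widehat{G})=H^1(k_v,\widehat{G})=0$ for all $v$; these vanishings drive the whole argument. For part~(i) I would first note that $\overline{T}$ is a torus, so $U(\overline{T})=\widehat{T}$, $\Pic\overline{T}=0$ and $\Br\overline{T}=0$; the Hochschild--Serre spectral sequence for $T\to\Spec k$ with $\Gm$-coefficients then collapses to a canonical, localization-compatible isomorphism $\Brs_{x^0}T\isoto H^2(k,\widehat{T})$, under which $\Be_S(T)$ corresponds to the subgroup $\Sha^2_S(k,\widehat{T})$ of classes whose localizations vanish outside $S$. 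The connecting map $\delta\colon H^1(k,\widehat{H})\to H^2(k,\widehat{T})$ is injective (as $H^1(k,\widehat{G})=0$), and I would check that it restricts to an isomorphism $\Sha^1_S(k,\widehat{H})\isoto\Sha^2_S(k,\widehat{T})$. The only nontrivial input is $\Sha^2_S(k,\widehat{G})=0$: by Shapiro this reduces, over the splitting fields $k_i$ of the permutation module, to the vanishing of a group of characters trivial on almost all decomposition groups, which is Chebotarev. Composing $\delta$ with the Brauer isomorphism yields $\beta_S$, and the $S=\emptyset$ case gives $\beta_{S,\emptyset}$ on the quotient.

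For part~(ii), I would first show the left vertical arrow is an isomorphism. Locally $\tilde{c}_S$ is the product of the connecting maps $\partial_v\colon T(k_v)\to H^1(k_v,H)$, which are surjective since $H^1(k_v,G)=0$; hence $c_S$ is surjective onto $\Ch^1_S(k,H)$. For the kernel, the cohomological description of $\overline{T(k)}$ in \S\ref{ss:cohomology} applies: since $H^1(k,G)=0$ we have $\Ker[H^1(k,H)\to H^1(k,G)]=H^1(k,H)$, and $t_S\in\overline{T(k)}$ exactly when $\tilde{c}_S(t_S)$ lies in the image of $\loc_S\colon H^1(k,H)\to H^1(k_S,H)$, i.e.\ when $c_S(t_S)=0$. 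Thus $c_S$, and hence $-c_S$, induces an isomorphism $T(k_S)/\overline{T(k)}\isoto\Ch^1_S(k,H)$.

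The crux is the commutativity of the square, i.e.\ matching the Manin pairing with the pairing $\cup_S$ of Theorem~\ref{t:dual_intro}; I would establish this place by place. The evaluation of $b\in\Brs_{x^0}T_{k_v}\cong H^2(k_v,\widehat{T})$ at $t_v\in T(k_v)$ equals the cup product $t_v\cup b$ for the tautological pairing $T(k_v)\times H^2(k_v,\widehat{T})\to\Br k_v$ induced by $\widehat{T}\otimes T\to\Gm$ --- the standard description of evaluation of a Brauer class as a cup product. Feeding in the compatibility of cup products with the connecting homomorphisms of the two dual sequences rewrites $\inv_v(b(t_v))$ as $-\langle\partial_v(t_v),\delta^{-1}(b)\rangle_v$, where $\langle\,,\,\rangle_v$ is the local Tate pairing $H^1(k_v,H)\times H^1(k_v,\widehat{H})\to\QQ/\ZZ$; the sign produced by this dimension shift is precisely what converts $c_S$ into $-c_S$. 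Summing over $v\in S$ and passing to quotients identifies the Manin pairing with $\cup_S$. I expect this sign-tracking compatibility to be the main obstacle, since it needs both the explicit cup-product formula for Brauer evaluation and careful bookkeeping of the connecting maps.

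Finally, for the perfectness statement I would observe that all the groups in play are finite --- $\Sha^1_S(k,\widehat{H})$ by the standard finiteness of Tate--Shafarevich groups of finitely generated modules over a number field, and $T(k_S)/\overline{T(k)}$ as the (finite) defect of weak approximation for the torus $T$, transported across $-c_S$. Theorem~\ref{t:dual_intro} gives that $\cup_S$ is non-degenerate on $\Sha^1_{S,\emptyset}(k,\widehat{H})\times\Ch^1_S(k,H)$; non-degeneracy together with finiteness forces the induced maps to the dual groups to be isomorphisms, so the bottom pairing is perfect. Via the commutativity just proved and the isomorphisms $\beta_{S,\emptyset}$ and $-c_S$, the top Manin pairing is identified with it and is therefore perfect as well.
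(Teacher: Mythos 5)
Your parts (i) and (ii)(a), and your identification of the Manin pairing with the cup-product pairing up to the sign coming from $\partial t\cup f=-t\cup\partial f$, all run essentially parallel to the paper: the Hochschild--Serre identification $\Brs_{x^0}T\cong H^2(k,\widehat{T})$ with evaluation given by cup product is the paper's Lemma \ref{l:h2_br} (quoted from Sansuc), your connecting-map isomorphism $\Sha^1_S(\widehat{H})\isoto\Sha^2_S(\widehat{T})$ is Lemma \ref{l:iso_Sha_Be} (with $\Sha^2_S(\widehat{G})=0$ cited from Sansuc (1.9.1) rather than reproved via Shapiro and Chebotarev), and your computation of $\Ker c_S=\overline{T(k)}$ through \S\ref{ss:cohomology} is a legitimate repackaging of the paper's diagram chase in Lemma \ref{l:iso_A_ch} --- both arguments rest, in the end, on weak approximation for the quasi-trivial group $G$.

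The genuine gap is the perfectness step. You invoke Theorem \ref{t:dual_intro} (i.e.\ Theorem \ref{t:dual_t}) for the non-degeneracy of the bottom pairing, but in this paper that theorem is a \emph{consequence} of the statement you are proving: it is deduced, after Theorem \ref{t:main_torus}, by embedding $H$ into a quasi-trivial torus and applying Lemma \ref{l:h2_h1_fin}, whose content is exactly the perfectness assertion you need. As written, your argument is therefore circular. The paper closes this loop by a different device: by Ono's lemma (\cite{Ono}, Theorem 1.5.1) there is an exact sequence $1\to B\to Q_1\to T^m\times Q_2\to 1$ with $Q_1,Q_2$ quasi-trivial tori and $B$ finite; running the whole diagram for the pair $(Q_1,B)$, the quasi-trivial factor $Q_2$ contributes nothing (weak approximation and $\Sha^2_S(\widehat{Q_2})=0$), and perfectness for the \emph{finite} module $B$ is taken from the proof of Sansuc's Lemma 1.4 \cite{Sansuc}; perfectness for $T^m$, hence for $T$, follows. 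Your architecture can be repaired without this reduction by citing an \emph{independent} proof of the duality --- e.g.\ the Poitou--Tate sequence for groups of multiplicative type (\cite{Demarche-Suites}, Thm.~6.3), which the paper itself mentions as an alternative route --- but you must say so explicitly; Theorem \ref{t:dual_intro} as stated in this paper is not available to you at this point in the argument. A smaller caveat: the finiteness you need is that of $\Sha^1_S(k,\widehat{H})$, not of $\Sha^1_\emptyset$; it is true, by inflation--restriction together with the same Chebotarev argument you use for $\Sha^2_S(\widehat{G})$, but it is not merely the "standard finiteness of Tate--Shafarevich groups" and deserves a proof or a precise reference.
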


When proving Theorem ~\ref{t:main_torus} we rely  on Sansuc ~\cite{Sansuc}, \S8.
After we establish our result we shall be able to prove the following duality theorem:

\begin{thm}\label{t:dual_t}
Let H be a group of multiplicative type over a number field $k$.
Then there is a non-degenerate pairing of finite abelian groups
$$\Sha_{S,\emptyset}^1(k,\widehat{H}) \times
\Ch^1_S(k,H) \xrightarrow{\cup_S} \QQ / \ZZ ,$$
where
$$ \cup_S = \sum\limits_{v \in S}\inv_v\circ \cup_{k_v}$$
and $\cup_{k_v}$ is the cup-product pairing:
$$ \cup_{k_v} \colon H^1({k_v},\widehat{H}) \times H^1({k_v},H) \xrightarrow{\cup} H^2({k_v},\mathbb{G}_m)=\Br k. $$
\end{thm}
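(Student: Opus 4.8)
The plan is to establish the duality theorem $\Sha^1_{S,\emptyset}(k,\widehat H)\times\Ch^1_S(k,H)\to\QQ/\ZZ$ by combining the perfect pairing already produced in the course of proving Theorem~\ref{t:main_torus}(ii) with the canonical isomorphisms of part~(i). The key observation is that Theorem~\ref{t:main_torus} has already done essentially all the work in the torus case: its commutative diagram exhibits $\cup_S$ as the bottom pairing, and clause (2) of part~(ii) asserts that this bottom pairing is a perfect pairing of finite abelian groups. So for $H=H^{\mult}$ sitting inside a \emph{quasi-trivial torus} $G$, the desired statement is immediate. The real content of Theorem~\ref{t:dual_t}, as a standalone statement about an \emph{arbitrary} $k$-group $H$ of multiplicative type, is to remove the hypothesis that $H$ be embedded in a quasi-trivial torus.

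First I would record that any $k$-group $H$ of multiplicative type admits an embedding into a quasi-trivial $k$-torus $G$. This is standard: since $\widehat H$ is a finitely generated Galois module, choose a surjection $P\twoheadrightarrow\widehat H$ from a permutation module $P$ (for instance the one induced from the action on a generating set), and dualize to obtain a closed embedding $H=\mathrm{Spec}$ of the Cartier dual into the quasi-trivial torus $G$ with $\widehat G=P$. With such an embedding fixed, the pair $(G,H)$ is a pair of $k$-groups in the sense of~\ref{subsec:pairs-of-groups} with $G$ a quasi-trivial torus, so Theorem~\ref{t:main_torus} applies verbatim, and its lower pairing is exactly the pairing $\cup_S$ defined via $\sum_{v\in S}\inv_v\circ\cup_{k_v}$ in the statement of Theorem~\ref{t:dual_t}.

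The one point requiring care — and the step I expect to be the main obstacle — is verifying that the pairing $\cup_S$ named in Theorem~\ref{t:dual_t} depends only on $H$ and not on the auxiliary embedding $H\hookrightarrow G$. This is really a definitional matter rather than a deep one: the groups $\Sha^1_{S,\emptyset}(k,\widehat H)$ and $\Ch^1_S(k,H)$ are manifestly intrinsic to $H$ (they are built from $H^1(k_v,\widehat H)$, $H^1(k_v,H)$ and the localization maps, with no reference to $G$), and the pairing $\cup_S$ is defined cohomologically through the cup-product $H^1(k_v,\widehat H)\times H^1(k_v,H)\to\Br k_v$, again with no reference to $G$. Thus $\cup_S$ is intrinsic by construction; the embedding into $G$ enters only as a device through which Theorem~\ref{t:main_torus} certifies that this intrinsic pairing is perfect. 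I would spell out that the cup-product pairing appearing as the bottom row of the diagram in Theorem~\ref{t:main_torus}(ii) literally coincides with $\sum_{v\in S}\inv_v\circ\cup_{k_v}$, so that the non-degeneracy conclusion transfers without modification.

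Finally I would assemble the pieces: given arbitrary $H$ of multiplicative type, embed it in a quasi-trivial torus $G$, invoke Theorem~\ref{t:main_torus}(ii)(2) to conclude that $\cup_S$ is a perfect pairing of finite abelian groups, and then note that because $\cup_S$ is intrinsic to $H$ the conclusion holds independently of the chosen $G$. The finiteness of both groups is part of the conclusion of Theorem~\ref{t:main_torus}, so no separate finiteness argument is needed. This completes the reduction and yields the non-degenerate pairing asserted in Theorem~\ref{t:dual_t}.
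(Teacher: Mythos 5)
Your proposal is correct and takes essentially the same route as the paper: its proof likewise embeds $H$ into a quasi-trivial $k$-torus $G$ (using that $\widehat{H}$ is a quotient of a permutation Galois module) and then invokes the perfectness of the pairing in the torus case, citing Lemma~\ref{l:h2_h1_fin} directly rather than Theorem~\ref{t:main_torus}(ii), which rests on that lemma. Your extra verification that $\cup_S$ is intrinsic to $H$ and independent of the chosen embedding is left implicit in the paper, but as you note it is a definitional observation rather than a substantive addition.
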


Our first step in proving Theorem ~\ref{t:main_torus} is to describe the Brauer group of $T$ in terms of
the second Galois cohomology of the Galois module $\widehat{T}$, and to describe the Manin pairing in terms of the cup product.

\begin{lem}\label{l:h2_br}
Let $T$ be a torus defined over a field $\mathbb{F}$ of characteristic 0.
Then there is a canonical commutative diagram
\be\label{e:h2_br}\xymatrix{
T(\mathbb{F}) \ar@{=}[d] \ar@{}[r]|-{\times}  &\Brs_e T \ar[r]^-{\ev} & \Br \mathbb{F} \ar@{=}[d] \\
T(\mathbb{F})\ar@{}[r]|-{\times}  &H^2(\mathbb{F},\widehat{T}) \ar[u]^{\theta_{\mathbb{F}}}_{\cong} \ar[r]^-{\cup} & \Br \mathbb{F}
}\ee
where $\ev$ is the evaluation map $(t,b) \mapsto b(t)$, and $\theta_{\mathbb{F}}$
is the canonical isomorphism of abelian groups from \cite{Sansuc}, Lemma 6.9(ii).
Both pairings in this diagram are additive in both arguments.
\end{lem}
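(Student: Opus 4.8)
The plan is to first recall the construction of $\theta_{\mathbb{F}}$ via the Hochschild--Serre spectral sequence and then deduce the commutativity of \eqref{e:h2_br} from the functoriality of that construction under the evaluation map. For the construction I would use
\[
E_2^{p,q}=H^p(\mathbb{F},H^q_{\text{\'et}}(T_{\overline{\mathbb{F}}},\Gm))\Rightarrow H^{p+q}_{\text{\'et}}(T,\Gm).
\]
Since $T$ is a torus there is a canonical splitting $\overline{\mathbb{F}}[T]^*=\overline{\mathbb{F}}^*\oplus\widehat{T}$ of Galois modules (the characters sit inside the invertible functions, by Rosenlicht's theorem as used in Proposition \ref{l:H_k_pic_u}) and $\Pic T_{\overline{\mathbb{F}}}=0$. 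Hence $E_2^{p,1}=0$, the edge map $E_2^{2,0}\hookrightarrow H^2_{\text{\'et}}(T,\Gm)$ is injective, and its image is exactly $\Brn T$, giving $\Brn T\cong H^2(\mathbb{F},\overline{\mathbb{F}}[T]^*)=\Br\mathbb{F}\oplus H^2(\mathbb{F},\widehat{T})$. Under this identification the summand $\Br\mathbb{F}$ (constant functions) is $\Brz T$, and since $\chi(e)=1$ for every character $\chi$, evaluation at $e$ kills the second summand; thus $\Brs_e T$ is identified with $H^2(\mathbb{F},\widehat{T})$, and this identification is Sansuc's $\theta_{\mathbb{F}}$.

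The key observation is that $\ev(t,-)$ for $t\in T(\mathbb{F})$ is the pullback $t^*\colon\Br T\to\Br\mathbb{F}$ along the morphism $t\colon\Spec\mathbb{F}\to T$. This morphism induces a map from the spectral sequence above to the (degenerate) Hochschild--Serre spectral sequence of $\Spec\mathbb{F}$, so $t^*$ is compatible with the edge identifications. On $E_2^{2,0}$ the induced map is $H^2$ of the coefficient homomorphism $\overline{\mathbb{F}}[T]^*\to\overline{\mathbb{F}}^*$, $f\mapsto f(t)$, whose restriction to the summand $\widehat{T}$ is the Galois homomorphism $\chi\mapsto\chi(t)$ (the complementary constant summand lands in $\Brz$ and is irrelevant on $\Brs_e$). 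Therefore the composite $H^2(\mathbb{F},\widehat{T})\xrightarrow{\theta_{\mathbb{F}}}\Brs_e T\hookrightarrow\Br T\xrightarrow{t^*}\Br\mathbb{F}$ equals the map induced on $H^2$ by the coefficient homomorphism $\chi\mapsto\chi(t)$.

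It remains to recognize the right-hand side as the cup product. The pairing in \eqref{e:h2_br} is induced by the tautological Galois pairing $T(\overline{\mathbb{F}})\times\widehat{T}\to\overline{\mathbb{F}}^*=\Gm(\overline{\mathbb{F}})$, $(t,\chi)\mapsto\chi(t)$, via
\[
T(\mathbb{F})\times H^2(\mathbb{F},\widehat{T})=H^0(\mathbb{F},T(\overline{\mathbb{F}}))\times H^2(\mathbb{F},\widehat{T})\xrightarrow{\cup}H^2(\mathbb{F},\overline{\mathbb{F}}^*)=\Br\mathbb{F}.
\]
I would then invoke the standard fact that cupping with a degree-zero class $t\in H^0(\mathbb{F},T(\overline{\mathbb{F}}))$ coincides with the map induced on cohomology by the coefficient homomorphism $\langle t,-\rangle\colon\widehat{T}\to\overline{\mathbb{F}}^*$, $\chi\mapsto\chi(t)$. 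This is precisely the map obtained in the previous paragraph, yielding $\ev(t,\theta_{\mathbb{F}}(a))=t\cup a$ for all $t\in T(\mathbb{F})$ and $a\in H^2(\mathbb{F},\widehat{T})$; bi-additivity is immediate from the bilinearity of $\cup$ and of evaluation. The main obstacle I anticipate is bookkeeping rather than conceptual: one must verify that the functoriality of the spectral sequence respects the decomposition $\overline{\mathbb{F}}[T]^*=\overline{\mathbb{F}}^*\oplus\widehat{T}$, and that the normalization and sign conventions in Sansuc's definition of $\theta_{\mathbb{F}}$ agree with the tautological pairing defining $\cup$, so that the identification introduces no spurious sign.
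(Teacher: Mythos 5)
Your argument is correct, but it takes a genuinely different route from the paper's. The paper disposes of the lemma by citation: diagram \eqref{e:h2_br} \emph{is} the upper half of the commutative diagram (8.11.2) in the proof of Sansuc's Lemma 8.11, with $\theta_{\mathbb{F}}$ the map $H^2(\mathbb{F},U(\overline{T}))\to\Bra T$ coming from the Hochschild--Serre exact sequence of Sansuc's Lemma 6.3(ii), and the additivity of the top pairing is simply inherited from the bottom one. You instead reconstruct the cited ingredient from scratch: the canonical splitting $\overline{\mathbb{F}}[T]^*=\overline{\mathbb{F}}^*\oplus\widehat{T}$ (Rosenlicht, normalized at $e$) together with $\Pic T_{\overline{\mathbb{F}}}=0$ kills the $q=1$ row, the edge map then identifies $\Brn T$ with $\Br\mathbb{F}\oplus H^2(\mathbb{F},\widehat{T})$ and $\Brs_e T$ with the second summand, and functoriality of the spectral sequence along $t\colon\Spec\mathbb{F}\to T$, combined with the standard fact that cupping with an $H^0$-class equals the map induced by the corresponding coefficient homomorphism $\chi\mapsto\chi(t)$, yields $\ev(t,\theta_{\mathbb{F}}(a))=t\cup a$. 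All the spectral-sequence steps check out ($E_2^{0,1}=0$ and the vanishing of the relevant differentials do give $E_2^{2,0}=E_\infty^{2,0}=F^2H^2=\Brn T$). What your route buys is transparency---one sees exactly why evaluation becomes cup product---at the cost of the normalization check you yourself flag: that your edge-map isomorphism coincides, including sign, with Sansuc's $\theta_{\mathbb{F}}$ of Lemma 6.9(ii). This check is not cosmetic in the context of this paper, since the later sign bookkeeping (the anticommutativity in Lemma \ref{l:cup_lem} and the $-c_S$ in Theorem \ref{t:main_torus}) presupposes Sansuc's convention; with the opposite normalization your diagram would anticommute. One small slip of phrasing: bi-additivity is not ``immediate from the bilinearity \ldots of evaluation''---evaluation is \emph{not} a priori additive in $t$ on all of $\Brn T$ (on $\Brz T$ it is constant in $t$); on $\Brs_e T$ it follows, as your argument in fact shows, from the established identity $\ev(t,\theta_{\mathbb{F}}(a))=t\cup a$ together with the bilinearity of the cup product, which is also how the paper (via Sansuc) gets it.
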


Note that the additivity of the pairing $\ev$ in the first argument means that
$$
\ev(tt',b)=\ev(t,b)+\ev(t',b) \quad \text{for } t,t'\in T(\mathbb{F}), b\in \Brs_e T.
$$

\begin{proof}
This is the upper half of the commutative diagram (8.11.2) in the proof of ~\cite{Sansuc}, Lemma 8.11.
Note that $\theta_{\mathbb{F}}$ is the map
$\theta_{\mathbb{F}}\colon H^2(\mathbb{F},\widehat{T}) = H^2(\mathbb{F},U(\overline{T})) \to \Bra T = \Brs_{e}T$
that appears in the long exact sequence in ~\cite{Sansuc}, Lemma 6.3(ii).
Since the bottom pairing is additive in both arguments, so is the top one.
\end{proof}

\begin{subsec}\label{subsec:br_fin}
Now let $k$ be a number field and  $S \subset \cV(k)$ be a finite set of places.
Then diagram \eqref{e:h2_br} above can be used in order to compute the Manin pairing.

First we note that for every $v \in S\subset \cV(k)$ there is a canonical inclusion $\inv_v \colon \Br k_v \hookrightarrow \QQ / \ZZ $.
Thus for every $v \in S$  we obtain a commutative diagram
\be\label{e:h2_br_loc}\xymatrix{
T(k_v) \ar@{=}[d] \ar@{}[r]|-{\times}  &\Brs_e T_{k_v} \ar[r]^-{\inv_v\, \circ\,  \ev_v} & \QQ/\ZZ \ar@{=}[d] \\
T(k_v) \ar@{}[r]|-{\times}  &H^2(k_v,\widehat{T}) \ar[u]^{\theta_v}_{\cong} \ar[r]^-{\inv_v\, \circ\, \cup_v} & \QQ/\ZZ ,\\
}\ee
where $\theta_v := \theta_{k_v}$.

Now we sum several copies of diagram ~\eqref{e:h2_br_loc} (one for each $v \in S$) and obtain a commutative diagram
 \be\label{e:h2_br_loc_multy}\xymatrix{
T(k_S) \ar@{=}[dd] \ar@{}[r]|-{\times}  &\prod_{v \in S} \Brs_e T_{k_v} \ar[rr]^-{\sum \inv_v\circ \ev_v}& & \QQ/\ZZ \ar@{=}[dd] \\
& & & \\
T(k_S) \ar@{}[r]|-{\times}  &H^2(k_S,\widehat{T}) \ar[uu]^{\sum \limits_{v \in S} \theta_v}_{\cong} \ar[rr]^-{\cup_S}& & \QQ/\ZZ .
}\ee
Since the isomorphism $\theta_{\mathbb{F}}$ is functorial in $\mathbb{F}$, it induces natural isomorphisms
\begin{align}\label{e:iso_sha2_be}
\theta_S \colon \Sha^2_S(\widehat{T}) \isoto &\Be_S(T),\\
\theta_{S,\emptyset} \colon \Sha^2_{S,\emptyset}(\widehat{T}) \isoto &\Be_{S,\emptyset}(T).
\end{align}

Using the natural homomorphisms
$$\loc_S\colon \Sha^2_S(\widehat{T}) \to H^2(k_S,\widehat{T}),\quad \Be_S(T) \to \prod \limits_{v \in S} \Brs_e T_{k_v},$$
we obtain from diagram \eqref{e:h2_br_loc_multy} a commutative diagram
$$
\xymatrix{
T(k_S) \ar@{=}[d] \ar@{}[r]|-{\times}  &\Be_S(T) \ar[r]^-{\ev_S} & \QQ/\ZZ \ar@{=}[d] \\
T(k_S) \ar@{}[r]|-{\times}  &\Sha_S^2(\widehat{T}) \ar[u]^{\theta_S}_{\cong} \ar[r]^-{\cup_S} & \QQ/ \ZZ \\
}
$$
where $\theta_S$ is  isomorphism \eqref{e:iso_sha2_be} and
$$
\ev_S(t_S,b)= \sum_{v\in S} \inv_v(b(t_v))\text{ for }b\in\Be_S(T) \subset \Br_e T \subset \Brn T \text{ and } t_S\in T(k_S).
$$

But $\ev_S$ is exactly the Manin pairing ~\eqref{e:def_br_obs} (denoted by $\langle\,,\,\rangle_S$ there).
Thus we obtain the following commutative diagram, containing the map induced by the Manin pairing as the top pairing:
\be
\label{e:h2_br_fin}\xymatrix{
T(k_S)/\overline{T(k)} \ar@{=}[d] \ar@{}[r]|-{\times}  &\Be_{S,\emptyset}(T) \ar[r]^-{\langle\, ,\, \rangle_S} & \QQ/\ZZ \ar@{=}[d] \\
T(k_S)/\overline{T(k)} \ar@{}[r]|-{\times}  &\Sha_{S,\emptyset}^2(\widehat{T}) \ar[u]^{\theta_S}_{\cong} \ar[r]^-{\cup_S} & \QQ/ \ZZ \\
}
\ee
Note that we may write $T(k_S)/\overline{T(k)}$ instead of $T(k_S)$, because we know that the Manin pairing
vanishes on $\overline{T(k)}$ and is additive in the first argument.
\end{subsec}

The next step in our proof will be using connecting maps in order to reduce the second Galois cohomology that appears
in diagram ~\eqref{e:h2_br_fin} to the first Galois cohomology.

\begin{lem}\label{l:cup_lem}
Let $1 \to H \to G \to T \to 1$ be a short exact sequence of
groups of multiplicative type over a field $\mathbb{F}$ of characteristic 0,
and let $0 \to \widehat{T} \to \widehat{G} \to \widehat{H} \to 0$ be the dual exact sequence. Then
the following diagram anti-commutes:
\be\label{e:h1_h2}\xymatrix{
H^0(\mathbb{F},T) \ar[d]^{\partial} \ar@{}[r]|-{\times}  &H^2(\mathbb{F},\widehat{T}) \ar[r]^-{\cup} & \Br(\mathbb{F}) \ar@{=}[d]\\
H^1(\mathbb{F},H) \ar@{}[r]|-{\times}  &H^1(\mathbb{F},\widehat{H})\ar[u]^{\partial} \ar[r]^-{\cup} & \Br(\mathbb{F}) \\
}\ee
\end{lem}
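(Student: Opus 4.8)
The plan is to prove the anti-commutativity by an explicit cochain computation, realizing both cup products as the two terms produced by the Leibniz rule applied to a single mixed cochain built from the evaluation pairing $\langle\,,\,\rangle_G\colon G\times\widehat{G}\to\Gm$. The crucial observation is that this pairing is compatible with the two exact sequences: it restricts to the pairing $H\times\widehat{H}\to\Gm$ of the bottom row, because the $G$-pairing of $H$ with $\widehat{G}$ kills $\widehat{T}=\Ker[\widehat{G}\to\widehat{H}]$ (characters of $T=G/H$ vanish on $H$) and hence descends to $H\times\widehat{H}$; and it descends to the pairing $T\times\widehat{T}\to\Gm$ of the top row, because elements of $\widehat{T}\subset\widehat{G}$ factor through $T$. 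This is the standard compatibility of cup products with connecting homomorphisms, and the sign is an automatic consequence of the Leibniz rule.

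First I would fix cochain representatives. Take $t\in H^0(\mathbb{F},T)=T(\mathbb{F})$ and choose a lift $g\in G(\overline{\mathbb{F}})$ of $t$; then $\partial t$ is represented by the $1$-cocycle $a_\sigma=g^{-1}\cdot{}^\sigma g$, which takes values in $H$ since its image in $T$ is $t^{-1}\cdot{}^\sigma t=e$. Similarly represent $\eta\in H^1(\mathbb{F},\widehat{H})$ by a $1$-cocycle $\chi=(\chi_\sigma)$ and lift each $\chi_\sigma$ to $\psi_\sigma\in\widehat{G}$; then $\partial\eta$ is represented by the $2$-cocycle $(d\psi)_{\sigma,\tau}=\psi_\sigma+{}^\sigma\psi_\tau-\psi_{\sigma\tau}$, which takes values in $\widehat{T}$ because its image in $\widehat{H}$ is the coboundary of $\chi$, hence zero.

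Next I would form the $1$-cochain with values in $\Gm(\overline{\mathbb{F}})=\overline{\mathbb{F}}^*$ given by $(g\cup\psi)_\sigma=\langle g,\psi_\sigma\rangle_G$, that is, the cup product for the $G$-pairing of the $0$-cochain $g$ with the $1$-cochain $\psi$. The Leibniz rule for inhomogeneous cochains gives
$$ d(g\cup\psi)=(dg)\cup\psi+(-1)^{0}\,g\cup(d\psi)=(dg)\cup\psi+g\cup(d\psi). $$
I would then identify the two terms. The term $(dg)\cup\psi$ is $\sigma,\tau\mapsto\langle a_\sigma,{}^\sigma\psi_\tau\rangle_G$; since $a_\sigma\in H$ the $G$-pairing factors through $\widehat{H}$, so this equals $\langle a_\sigma,{}^\sigma\chi_\tau\rangle$, which is exactly the $2$-cocycle representing $\partial t\cup\eta$ for the bottom pairing. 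The term $g\cup(d\psi)$ is $\sigma,\tau\mapsto\langle g,(d\psi)_{\sigma,\tau}\rangle_G$; since $(d\psi)_{\sigma,\tau}\in\widehat{T}$ the $G$-pairing descends to the $T$-pairing, so this equals $\langle t,(d\psi)_{\sigma,\tau}\rangle_T$, which is exactly the $2$-cocycle representing $t\cup\partial\eta$ for the top pairing.

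Finally, since the left-hand side $d(g\cup\psi)$ is a coboundary, its class in $H^2(\mathbb{F},\Gm)=\Br\mathbb{F}$ vanishes, so passing to cohomology yields $\partial t\cup\eta+t\cup\partial\eta=0$, i.e. $\partial t\cup\eta=-\,t\cup\partial\eta$, which is the asserted anti-commutativity. The main thing to watch is the bookkeeping: fixing the precise conventions for the inhomogeneous cup product and for the two connecting maps, and verifying that $\langle\,,\,\rangle_G$ genuinely restricts and descends to the pairings of the two rows. I expect this compatibility check to be the only real obstacle, since the rest is formal; note that the anti-commutativity (rather than commutativity) is forced by the Leibniz rule, which produces the two terms with the \emph{same} sign, so that it is their sum, not their difference, that vanishes.
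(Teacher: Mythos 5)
Your proposal is correct and follows essentially the same route as the paper's own proof: lift $t$ and the cocycle representing the class in $H^1(\mathbb{F},\widehat{H})$ to cochains valued in $G$ and $\widehat{G}$, apply the Leibniz rule $d(t_G\cup f_G)=d(t_G)\cup f_G+t_G\cup d(f_G)$ to the mixed cochain built from the evaluation pairing $G\times\widehat{G}\to\Gm$, and pass to cohomology to get $\partial t\cup f=-\,t\cup\partial f$. Your write-up merely makes explicit the compatibility of the $G$-pairing with the restriction to $H\times\widehat{H}$ and the descent to $T\times\widehat{T}$, which the paper leaves implicit (citing Sansuc, Lemma 8.11), so no further comparison is needed.
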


\begin{proof}
The proof is similar to that of  ~\cite{Sansuc}, Lemma 8.11.
Let $t\in T(\mathbb{F})$ and $f \in H^1(\mathbb{F},\widehat{H})$. Let us lift $t$ and $f$ to $t_G \in C^0(\mathbb{F},G)$ and $f_G \in C^1(\mathbb{F},\widehat{G})$, respectively.
We have $d(t_G \cup f_G) = d(t_G) \cup f_G + t_G \cup d(f_G)$. By passing to cohomology classes we obtain
$\partial t \cup f = - t \cup \partial f $.
\end{proof}

\begin{subsec}
By taking the anticommutative diagram ~\eqref{e:h1_h2} with $\mathbb{F} = k$ and $\mathbb{F} = k_v,  v \in S$,
and arguing as in \ref{subsec:br_fin},
we obtain an anticommutative diagram
\be\label{e:h1_h2_fin}\xymatrix{
T(k_S) \ar[d]^{\partial^d_S}\ar@{}[r]|-{\times}  &\Sha_{S,\emptyset}^2(\widehat{T}) \ar[r]^-{\cup_S} &  \QQ / \ZZ \ar@{=}[d] \\
\Ch_S^1(H) \ar@{}[r]|-{\times}  &\Sha_{S,\emptyset}^1(\widehat{H}) \ar[u]^{\partial^u_S} \ar[r]^-{\cup_S} &  \QQ / \ZZ. \\
}\ee
Note that in diagram ~\eqref{e:h1_h2_fin}  we may write $\Ch_S^1(H)$ instead of $H^1(k_S,H)$, because
by the short exact sequence
$$
0\to \Br k\to\oplus\Br k_v \xrightarrow{\sum \inv_v} \QQ/\ZZ
$$
the image of the map $$\loc_S\colon H^1(k,H) \to H^1(k_S,H)$$
lies in the left kernel of the pairing
$$
H^1(k_S,H) \times \Sha^1_S(\widehat{H}) \xrightarrow{\cup_S} \QQ / \ZZ.
$$
\end{subsec}

Now we prove  that the map $\partial^d_S$ from diagram ~\eqref{e:h1_h2_fin} is exactly our obstruction $c_S$
and that it induces an isomorphism
$T(k_S) / \overline{T(k)} \to \Ch_S^1(H).$

\begin{lem}\label{l:iso_A_ch}
Let a pair of $k$-groups $(G,H)$ be such that $G$ is a (quasi-trivial) torus.
Set $T=G/H$.
We denote by
$\alpha\colon H^1(k_S,H) \to \Ch_S^1(H)$ the
canonical epimorphism, and by $\partial_S$ the map
$$ T(k_S) \xrightarrow{\partial_S} H^1(k_S,H),$$
obtained from the short exact sequence of groups of multiplicative type
$1 \to H \to G \to T \to 1$. Then
\begin{enumerate}
\item \label{l:iso_A_ch:it:partial_is_cs}$\alpha \circ \partial_S = c_S$;
\item\label{l:homomorphism} $c_S$ is a homomorphism;
\item \label{l:iso_A_ch:it:cs_is_surjective}$c_S$ is surjective;
 \item\label{l:iso_A_ch:it:cs_is_complete} $\Ker c_S = \overline{T(k)}$.
\end{enumerate}
\end{lem}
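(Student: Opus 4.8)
The plan is to prove the four assertions essentially in the order listed, as each builds on the previous. The central object is the connecting map $\partial_S\colon T(k_S)\to H^1(k_S,H)$ coming from the short exact sequence $1\to H\to G\to T\to 1$ of groups of multiplicative type, and the strategy throughout is to reduce global statements to the description of $c_S$ given by diagram \eqref{e:big_coho}, together with the long exact cohomology sequence associated to $1\to H\to G\to T\to 1$.

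\emph{Proof of} \eqref{l:iso_A_ch:it:partial_is_cs}. First I would trace through diagram \eqref{e:big_coho} in the present special case, where $G$ is a torus, $H=H^{\mult}$, and $X=T=G/H$ is itself a group. Here the composite $\tau_S\circ\nu_S\colon X(k_S)\to \Ker[H^1(k_S,H)\to H^1(k_S,G)]$ sends $x_S$ to the class of its $G(k_S)$-orbit, and by the standard description of the connecting map for a homogeneous space with abelian stabilizer (cf.\ \cite[I.5.4]{Serre:CG}) this class agrees with $\partial_S(x_S)$. Since $H=H^{\mult}$, the map $\multmap_S$ is the identity, so $\tilde c_S=\partial_S$ and hence $c_S=\alpha\circ\partial_S$.

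\emph{Proof of} \eqref{l:homomorphism}. Because $G$ is a torus, the sequence $1\to H\to G\to T\to 1$ is a short exact sequence of commutative $k$-group schemes, so the connecting map $\partial_{k_v}\colon T(k_v)\to H^1(k_v,H)$ is a group homomorphism for each $v$, and therefore so is $\partial_S$; composing with the homomorphism $\alpha$ shows $c_S$ is a homomorphism.

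\emph{Proof of} \eqref{l:iso_A_ch:it:cs_is_surjective} and \eqref{l:iso_A_ch:it:cs_is_complete}. These are the substantive points, and here I would invoke the long exact sequence in Galois cohomology. Surjectivity of $c_S$ amounts to showing that $H^1(k_S,H)$ is generated by the image of $\partial_S$ together with the image of $\loc_S\colon H^1(k,H)\to H^1(k_S,H)$; the cokernel of $\partial_S$ locally is $\Ker[H^1(k_v,H)\to H^1(k_v,G)]$, and the point is that this injects into $H^1(k_v,G)$, where $G$ is quasi-trivial, so one can arrange the needed global classes. For \eqref{l:iso_A_ch:it:cs_is_complete}, by Lemma \ref{l:c_is_obs} we already have $\overline{T(k)}\subseteq\Ker c_S$; for the reverse inclusion I would combine the exactness of $G(k_S)\to T(k_S)\xrightarrow{\partial_S}H^1(k_S,H)$ with Lemma \ref{l:orbits}, reducing the statement that $x_S\in\overline{T(k)}$ to finding a global class in $H^1(k,H)$ mapping to $\partial_S(x_S)$ modulo the image of $G(k_S)$. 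The main obstacle is precisely the combination of \eqref{l:iso_A_ch:it:cs_is_surjective} and \eqref{l:iso_A_ch:it:cs_is_complete}: controlling simultaneously the images of $\partial_S$ and $\loc_S$ in $H^1(k_S,H)$. The cleanest route is a diagram chase in the commutative ladder comparing the global and semilocal long exact sequences attached to $1\to H\to G\to T\to 1$, using that $G$ quasi-trivial forces $H^1(k,G)$ and the $H^1(k_v,G)$ to be well understood and, crucially, that $T(k)$ is dense in the relevant quotient so that weak approximation for the quasi-trivial torus $G$ (which does satisfy weak approximation) can be transferred to $T$ through $\partial$.
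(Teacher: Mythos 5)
Your proposal matches the paper's proof essentially step for step: (i) by identifying $\tau_S\circ\nu_S$ in diagram \eqref{e:big_coho} with the connecting map $\partial_S$ (with $\mu_S$ the identity since $H=H^{\mult}$), (ii) by additivity of $\partial_S$ and $\alpha$, and (iii)--(iv) by the chase in the global/semilocal cohomology ladder attached to $1\to H\to G\to T\to 1$ together with weak approximation for the quasi-trivial torus $G$. The one point you should make explicit (and a small slip to fix: the \emph{image}, not the cokernel, of $\partial_{k_v}$ equals $\Ker[H^1(k_v,H)\to H^1(k_v,G)]$) is that quasi-triviality gives $H^1(k,G)=0$ and $H^1(k_v,G)=0$ by Shapiro's lemma and Hilbert 90, so $\partial$ and $\partial_S$ are surjective outright; then (iii) is immediate with no ``arranging of global classes'' needed, and your chase for (iv) becomes exactly the paper's: lift the global class $h$ with $\loc_S(h)=\partial_S(t_S)$ to some $t\in T(k)$, conclude $t_S-t\in\rho_S(G(k_S))$, and use $\overline{G(k)}=G(k_S)$ to get $\rho_S(G(k_S))+T(k)\subseteq\overline{T(k)}$.
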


\begin{proof}
From the short exact sequence of groups of multiplicative type $$1 \to H
\to G \xrightarrow{\rho} T \to 0$$ we obtain a commutative diagram of abelian groups
\be\label{e:diag-G-T-H}
\xymatrix{
G(k)\ar@{^{(}->}[d]^-{\loc_S} \ar[r]^\rho &T(k)\ar@{^{(}->}[d]^-{\loc_S} \ar[r]^-\partial &H^1(k,H)\ar[d]^{\loc_S} \ar[r] &0\\
G(k_S) \ar[r]^-{\rho_S} &T(k_S) \ar[r]^-{\partial_S} &H^1(k_S,H) \ar[r] &0.
}
\ee
Since $H^{\mult} = H$, we have $c_S = \alpha \circ \tau_{S}\circ\nu_S$ (see \S\ref{ss:c_S}),
and ~\eqref{l:iso_A_ch:it:partial_is_cs} follows from the equality $\partial_S=\tau_S\circ\nu_S$.
Since $\alpha$ and $\partial_S$ are both homomorphisms, we see that $c_S$ is a homomorphism, which proves \eqref{l:homomorphism}.
Assertion ~\eqref{l:iso_A_ch:it:cs_is_surjective}  follows from the surjectivity of $\alpha$ and $\partial_S$.

We prove ~\eqref{l:iso_A_ch:it:cs_is_complete} by diagram chasing in diagram \eqref{e:diag-G-T-H}.
Since we know that $\overline{T(k)} \subseteq \Ker c_S$, it suffices to show that $\Ker c_S \subseteq \overline{T(k)}$.
Let $t_S \in\Ker c_S\subset T(k_S)$, i.e. $\alpha(\partial_S(t_S)) = 0$.
Then there exists $h \in H^1(k,H)$ such that $\partial_S(t_S) = \loc_S(h)$. We can find $t \in T(k)$
such that $\partial(t) = h$
(because $\partial$ is surjective).
We have
$$
\partial_S(t_S) = \loc_S(h) = \loc_S(\partial(t)) = \partial_S(\loc_S(t)),
$$ and therefore
$t_S-t \in \rho_S(G(k_S))$. Thus we have showed that $\Ker c_S \subseteq \rho_S(G(k_S)) + T(k)$, and
it suffices to prove that $ \rho_S(G(k_S)) + T(k) \subseteq  \overline{T(k)}$.
Since $G$ is quasi-trivial, it has the weak approximation property (cf. \cite{CT06}, Proposition 9.2),
and therefore $\overline{G(k)} = G(k_S)$.
We obtain:
\[
\rho_S(G(k_S)) + T(k) \subseteq \overline{\rho_S(G(k))}+T(k) \subseteq \overline{\rho(G(k))+T(k)}
= \overline{T(k)}.
\]
\end{proof}

The following lemma has been widely used, see e.g. \cite{Bor96}, Proof of Lemma 4.4, or \cite{Bor:cohomological-obstruction}, \S 3.5,
but we do not know a reference where it was stated, so we state and prove it here.

\begin{lem}\label{lem:abstract-nonsence}
Consider a commutative diagram of abelian groups with exact rows
$$
\xymatrix{
0\ar[r]  &A\ar[d]^{\lambda_A}\ar[r]^{\varphi}   &B\ar[d]^{\lambda_B}\ar[r]^{\psi}    &C\ar[d]^{\lambda_C} \\
0\ar[r]  &A'\ar[r]^{\varphi'}                   &B'\ar[r]^{\psi'}                    &C',
}
$$
then the induced sequence
$$
0\to\ker\lambda_A\labelto{\varphi_*} \ker\lambda_B\labelto{\psi_*}\ker\lambda_C
$$
is exact.
\end{lem}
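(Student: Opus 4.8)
The plan is to carry out the standard diagram chase that recovers the kernel portion of the snake (kernel--cokernel) lemma, using only commutativity of the squares and exactness of the top row. First I would check that $\varphi$ and $\psi$ restrict to well-defined maps $\varphi_*$ and $\psi_*$ on the kernels. For $a\in\ker\lambda_A$, commutativity of the left square gives $\lambda_B(\varphi(a))=\varphi'(\lambda_A(a))=\varphi'(0)=0$, so $\varphi(a)\in\ker\lambda_B$; the analogous computation with the right square, namely $\lambda_C(\psi(b))=\psi'(\lambda_B(b))$, shows that $\psi$ maps $\ker\lambda_B$ into $\ker\lambda_C$. Hence $\varphi_*$ and $\psi_*$ make sense.

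Exactness at $\ker\lambda_A$ is immediate: since the top row is exact at $A$ (the leftmost $0$), the map $\varphi$ is injective, and the restriction of an injective map is injective, so $\varphi_*$ is injective.

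For exactness at $\ker\lambda_B$ I would argue by two inclusions. The inclusion $\img\varphi_*\subseteq\ker\psi_*$ is just $\psi\circ\varphi=0$, i.e. exactness of the top row at $B$. For the reverse inclusion, take $b\in\ker\lambda_B$ with $\psi_*(b)=\psi(b)=0$; by exactness of the top row at $B$ there exists $a\in A$ with $\varphi(a)=b$, and it remains only to verify that $a$ actually lies in $\ker\lambda_A$. Here one applies $\varphi'$ to $\lambda_A(a)$ and uses commutativity: $\varphi'(\lambda_A(a))=\lambda_B(\varphi(a))=\lambda_B(b)=0$. Since the bottom row is exact at $A'$ the map $\varphi'$ is injective, forcing $\lambda_A(a)=0$, so $a\in\ker\lambda_A$ and $\varphi_*(a)=b$.

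The main (indeed only) point worth flagging is that last step: one must use injectivity of $\varphi'$, coming from exactness of the \emph{bottom} row at $A'$, and not merely injectivity of $\varphi$, in order to pull the lift $a$ back into $\ker\lambda_A$. Everything else follows routinely from commutativity and exactness of the top row; in particular, consistent with the conclusion being only a left-exact three-term sequence, the argument never invokes surjectivity of any $\lambda$ nor exactness at $C$ or $C'$.
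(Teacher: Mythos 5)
Your chase is correct, and each step is justified: the well-definedness of $\varphi_*$ and $\psi_*$ from commutativity, injectivity of $\varphi_*$ from the leftmost $0$ in the top row, and --- the one genuinely delicate point, which you rightly flag --- the use of injectivity of $\varphi'$ (exactness of the \emph{bottom} row at $A'$) to conclude that the lift $a$ of $b$ lies in $\ker\lambda_A$. That hypothesis is indispensable: without exactness of the bottom row at $A'$ the conclusion fails (e.g.\ $A=B=\ZZ$, $\varphi=\mathrm{id}$, $C=0$, $B'=C'=0$, $A'=\ZZ$, $\lambda_A=\mathrm{id}$, $\lambda_B=0$ gives $0\to 0\to\ZZ\to 0$, not exact in the middle). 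Your route does differ from the paper's, which is a two-line reduction: replace $C$ and $C'$ by $\im\psi$ and $\im\psi'$, so that both rows become short exact, and then quote the Snake Lemma, whose six-term sequence begins with exactly the three-term kernel sequence claimed. The paper's argument buys brevity by delegating the chase to a standard result, at the cost of a small implicit verification (that $\lambda_C$ restricts to a map $\im\psi\to\im\psi'$, and that replacing $\ker\lambda_C$ by $\ker(\lambda_C|_{\im\psi})$ changes nothing for exactness at $\ker\lambda_B$, since $\ker\psi_*$ is computed inside $\ker\lambda_B$ either way). Your self-contained chase buys transparency: it isolates precisely which exactness hypotheses are used and where --- in particular it makes visible that exactness at $C$ and $C'$ and surjectivity of the vertical maps are never needed, which the Snake-Lemma packaging obscures. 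Both proofs are complete and correct.
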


\begin{proof}
We replace $C$ and $C'$ by $\im\psi$ and $\im\psi'$, resp., and apply the Snake Lemma.
\end{proof}

We proceed by proving that the map $\partial^u_S$ from diagram \eqref{e:h1_h2_fin} is an isomorphism.

\begin{lem}\label{l:iso_Sha_Be}
Let $1 \to H \to G \to T \to 1$ be a short exact sequence of
groups of multiplicative type such that $G$ is a quasi-trivial torus.
Let
\be\label{e:iso_sha12}\partial \colon \Sha_S^1(\widehat{H}) \to \Sha^2_S(\widehat{T})\ee
be the map induced by the connecting map obtained from the short exact sequence
$$0 \to \widehat{T} \to \widehat{G} \to\widehat{H} \to 0.
$$
Then $\partial$ is an isomorphism.
\end{lem}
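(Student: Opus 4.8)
The plan is to read off the isomorphism from the long exact cohomology sequence attached to the dual sequence $0 \to \widehat{T} \to \widehat{G} \to \widehat{H} \to 0$, exploiting that $\widehat{G}$ is a permutation Galois module (since $G$ is a quasi-trivial torus), and then to cut the statement down to the $\Sha$-subgroups by means of the diagram-chasing Lemma~\ref{lem:abstract-nonsence}.

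First I would record the two vanishing statements that quasi-triviality of $G$ provides. The first is that $H^1(F,\widehat{G})=0$ for every field $F$ of characteristic $0$: writing $\widehat{G}$ as a finite direct sum of modules of the form $\ZZ[\Gal(\kbar/F)/\Gamma_i]$ and applying Shapiro's lemma reduces this to $H^1(F_i,\ZZ)=\Hom_{\mathrm{cont}}(\Gal(\kbar/F_i),\ZZ)=0$. Applying this over $k$ and over each completion $k_v$, the long exact sequence yields the exact rows
$$0\to H^1(k,\widehat{H})\xrightarrow{\ \partial\ } H^2(k,\widehat{T})\to H^2(k,\widehat{G})$$
globally, and the analogous exact sequence over each $k_v$. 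In particular $\partial$ is injective both globally and locally, and since the connecting map commutes with localization, $\partial$ carries $\Sha^1_S(\widehat{H})$ into $\Sha^2_S(\widehat{T})$, injectively.

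The crux, and the step I expect to be the main obstacle, is the second vanishing statement $\Sha^2_S(\widehat{G})=0$. Once more by Shapiro's lemma, and compatibly with localization via Mackey's formula, this reduces to proving $\Sha^2_{S'}(F,\ZZ)=0$ for a number field $F$ and a finite set of places $S'$. Here I would use $H^2(F,\ZZ)\cong H^1(F,\QQ/\ZZ)$, so that a class is a continuous character $\chi$ of $\Gal(\kbar/F)$; its triviality at $v$ means that $v$ splits completely in the cyclic extension cut out by $\chi$. A nontrivial $\chi$ would, by the Chebotarev density theorem, leave a set of places of positive density that do not split completely, so $\chi$ cannot be locally trivial at all $v\notin S'$ unless $\chi=0$. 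This gives $\Sha^2_{S'}(F,\ZZ)=0$ and hence $\Sha^2_S(\widehat{G})=0$.

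Finally I would assemble the pieces: taking the global exact row above and the product over $v\in\cV(k)\smallsetminus S$ of the local ones, connected by the localization maps $\loc_{S^\complement}$, I apply Lemma~\ref{lem:abstract-nonsence} with $A=H^1(k,\widehat{H})$, $B=H^2(k,\widehat{T})$ and $C=H^2(k,\widehat{G})$. This produces an exact sequence of kernels
$$0\to \Sha^1_S(\widehat{H})\xrightarrow{\ \partial\ }\Sha^2_S(\widehat{T})\to \Sha^2_S(\widehat{G}).$$
Since $\Sha^2_S(\widehat{G})=0$, the map $\partial$ is also surjective, hence an isomorphism, which is the assertion of the lemma.
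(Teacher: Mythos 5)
Your proof is correct and is essentially the paper's own argument: the same long exact sequence attached to $0\to\widehat{T}\to\widehat{G}\to\widehat{H}\to 0$, the same localization diagram over $S^\complement=\cV(k)\smallsetminus S$, and the same application of Lemma~\ref{lem:abstract-nonsence} together with the vanishing $\Sha^2_S(\widehat{G})=0$. The only difference is one of self-containedness: where the paper cites Sansuc (1.9.1) for $\Sha^2_S(\widehat{G})=0$ (and implicitly uses $H^1(\cdot,\widehat{G})=0$ for the exactness of the rows), you supply the standard Shapiro--Chebotarev proofs of these facts, which correctly fills in the cited ingredient rather than taking a different route.
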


\begin{proof}
From the short exact sequence $0 \to \widehat{T} \xrightarrow{i} \widehat{G} \to \widehat{H} \to 0$ we obtain
a commutative diagram with exact rows
$$
\xymatrix{
0 \ar[r]&H^1(k,\widehat{H}) \ar[d]^-{\loc_{S^\complement}} \ar[r]^-\partial &H^2(k,\widehat{T})\ar[d]^{\loc_{S^\complement}} \ar[r]^-{i}
&H^2(k,\widehat{G})\ar[d]^-{\loc_{S^\complement}} \\
0 \ar[r] &H^1(k_{S^\complement},\widehat{H}) \ar[r]^-{\partial_{S^\complement}} &H^2(k_{S^\complement},\widehat{T}) \ar[r]^-{i_{S^\complement}}
&H^2(k_{S^\complement},\widehat{G}),
}
$$
where $S^\complement=\sV(k)\smallsetminus S$.
Since the $k$-torus $G$ is quasi-trivial, by \cite{Sansuc}, (1.9.1), we have  $\Sha^2_S(\widehat{G})=0$.
Now by Lemma \ref{lem:abstract-nonsence} the induced homomorphism \eqref{e:iso_sha12} is an isomorphism.
\end{proof}

Using Lemmas ~\ref{l:iso_A_ch} and ~\ref{l:iso_Sha_Be}, we can rewrite
diagram ~\eqref{e:h1_h2_fin}.

\begin{lem}\label{l:h2_h1_fin}
Let $(G,H)$ be a pair of $k$-groups  such that $G$ is a quasi-trivial torus. Set $T=G/H$.
Then we have a commutative diagram
\be\label{e:t_main_1}\xymatrix{
T(k_S)/\overline{T(k)} \ar[d]^{-c_S}_{\cong} \ar@{}[r]|-{\times}  &\Sha_{S,\emptyset}^2(\widehat{T})  \ar[r]^-{\cup_S} &  \QQ / \ZZ \ar@{=}[d] \\
\Ch_S^1(H) \ar@{}[r]|-{\times}  &\Sha_{S,\emptyset}^1(\widehat{H}) \ar[u]^{\partial^u_S}_{\cong} \ar[r]^-{\cup_S} &  \QQ / \ZZ\, .
}\ee
In this diagram:
\begin{enumerate}
\item\label{l:h2_h1_fin:it:iso} Both vertical arrows are isomorphisms.
\item\label{l:h2_h1_fin:it:pairing} Both pairings are perfect pairings of finite abelian groups.
\end{enumerate}
\end{lem}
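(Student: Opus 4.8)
The plan is to assemble diagram \eqref{e:t_main_1} out of the three ingredients already available: the anticommutative diagram \eqref{e:h1_h2_fin}, Lemma \ref{l:iso_A_ch}, and Lemma \ref{l:iso_Sha_Be}. First I would check that the two vertical arrows are isomorphisms, which is assertion \eqref{l:h2_h1_fin:it:iso}. For the left arrow, recall from Lemma \ref{l:iso_A_ch}\eqref{l:iso_A_ch:it:partial_is_cs} that the map $\partial^d_S$ of \eqref{e:h1_h2_fin} is precisely $c_S=\alpha\circ\partial_S$; by parts \eqref{l:homomorphism}, \eqref{l:iso_A_ch:it:cs_is_surjective} and \eqref{l:iso_A_ch:it:cs_is_complete} of that lemma, $c_S$ is a surjective homomorphism with kernel $\overline{T(k)}$, so $c_S$ --- and hence $-c_S$ --- induces an isomorphism $T(k_S)/\overline{T(k)}\isoto\Ch_S^1(H)$. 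For the right arrow, Lemma \ref{l:iso_Sha_Be} provides an isomorphism $\partial\colon\Sha^1_S(\widehat H)\isoto\Sha^2_S(\widehat T)$; since $\partial$ is the restriction of a single connecting map $H^1(k,\widehat H)\to H^2(k,\widehat T)$ that is injective (as the proof of that lemma shows), applying the same lemma with $S=\emptyset$ shows that $\partial$ carries the subgroup $\Sha^1_\emptyset(\widehat H)$ isomorphically onto $\Sha^2_\emptyset(\widehat T)$. Passing to the quotients then yields the desired isomorphism $\partial^u_S\colon\Sha^1_{S,\emptyset}(\widehat H)\isoto\Sha^2_{S,\emptyset}(\widehat T)$.

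Next I would deduce the commutativity of \eqref{e:t_main_1} from the anticommutativity of \eqref{e:h1_h2_fin}. The left vertical map of \eqref{e:h1_h2_fin} is $\partial^d_S=c_S$, so for $t_S\in T(k_S)$ and $f\in\Sha^1_{S,\emptyset}(\widehat H)$ anticommutativity reads $t_S\cup_S\partial^u_S(f)=-\bigl(c_S(t_S)\cup_S f\bigr)=\bigl((-c_S)(t_S)\bigr)\cup_S f$; this is exactly the commutativity of \eqref{e:t_main_1} with the left arrow taken to be $-c_S$. Thus the single sign on $-c_S$ is what converts the anticommutative square into the commutative one.

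Finally, for assertion \eqref{l:h2_h1_fin:it:pairing} I would note that, since the square commutes and both vertical arrows are isomorphisms, the top and bottom pairings are equivalent: one is a perfect pairing of finite abelian groups if and only if the other is. It therefore suffices to treat the top pairing. Using the isomorphism $\theta_{S,\emptyset}\colon\Sha^2_{S,\emptyset}(\widehat T)\isoto\Be_{S,\emptyset}(T)$ of \eqref{e:iso_sha2_be} together with diagram \eqref{e:h2_br_fin}, the top cup-product pairing is identified with the Manin pairing $T(k_S)/\overline{T(k)}\times\Be_{S,\emptyset}(T)\to\QQ/\ZZ$. For a $k$-torus $T$ this Manin pairing is a perfect pairing of finite abelian groups, by Sansuc's analysis of weak approximation for tori in \cite{Sansuc}, \S8; transporting finiteness and perfectness across $-c_S$ and $\partial^u_S$ gives the same for the bottom pairing.

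The step I expect to be the main obstacle is this last one: locating and invoking the exact result of \cite{Sansuc}, \S8 that yields both the finiteness of $T(k_S)/\overline{T(k)}$ and $\Be_{S,\emptyset}(T)$ and the perfectness of the Manin pairing between them for an arbitrary $k$-torus. Once that input is secured, everything else is formal bookkeeping with the already-established Lemmas \ref{l:iso_A_ch} and \ref{l:iso_Sha_Be} and the sign introduced in $-c_S$.
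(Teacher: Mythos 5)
Your handling of assertion (\ref{l:h2_h1_fin:it:iso}) and of the commutativity of \eqref{e:t_main_1} matches the paper's proof: the left vertical arrow is an isomorphism by Lemma \ref{l:iso_A_ch} (a surjective homomorphism with kernel $\overline{T(k)}$), the right one by Lemma \ref{l:iso_Sha_Be} (your extra verification that $\partial$ carries $\Sha^1_\emptyset(\widehat{H})$ isomorphically onto $\Sha^2_\emptyset(\widehat{T})$, so that the map on quotients is well defined and bijective, is correct and is only implicit in the paper), and the single sign on $-c_S$ is indeed what turns the anticommutative square \eqref{e:h1_h2_fin} into a commutative one.

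The genuine gap is exactly where you predicted it: assertion (\ref{l:h2_h1_fin:it:pairing}). Your formal reductions are fine --- perfectness transports across the two vertical isomorphisms, and via $\theta_{S,\emptyset}$ and diagram \eqref{e:h2_br_fin} the top pairing is the Manin pairing --- but these steps only reformulate the claim, and the input you then invoke, a ready-made perfect duality of finite groups $T(k_S)/\overline{T(k)}\times\Be_{S,\emptyset}(T)\to\QQ/\ZZ$ for an arbitrary torus and a \emph{fixed} finite $S$, is not available in \cite{Sansuc}, \S 8. Sansuc's duality statements there concern the obstruction taken over all finite sets $S$ simultaneously (the group $\Be_\omega$, cf.\ the Remark in \S\ref{s:intro} of this paper); the fixed-$S$ group $\Be_{S,\emptyset}$ is the refinement of \cite{Bor96}, and in this paper the perfectness you want to quote is precisely what Lemma \ref{l:h2_h1_fin}(\ref{l:h2_h1_fin:it:pairing}) is used to prove --- it becomes Theorem \ref{t:main_torus}(ii)(b) and Theorem \ref{t:dual_t} --- so citing it here would be circular. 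The paper closes this gap with a different device: by Ono's lemma \cite{Ono} there is an exact sequence $1\to B\to Q_1\to T^m\times Q_2\to 1$ with $m>0$, $Q_1,Q_2$ quasi-trivial tori and $B$ \emph{finite}; writing diagram \eqref{e:t_main_1} for the pair $(Q_1,B)$ and stripping off the $Q_2$-factor (weak approximation for $Q_2$ and $\Sha^2_S(\widehat{Q_2})=0$) yields diagram \eqref{e:t_main_1_2}, whose bottom pairing $\Ch^1_S(B)\times\Sha^1_{S,\emptyset}(\widehat{B})\to\QQ/\ZZ$ is a perfect pairing of finite abelian groups by the proof of Lemma 1.4 in \cite{Sansuc} (a Poitou--Tate-type duality for finite modules); part (\ref{l:h2_h1_fin:it:iso}), already established, then transports finiteness and perfectness up to the pairing for $T^m$, hence for $T$, and back down for general $H$. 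Your proof needs this Ono-plus-finite-case argument (or an equivalent substitute, such as the Poitou--Tate sequence for groups of multiplicative type as in \cite{Demarche-Suites}) in place of the vague appeal to \cite{Sansuc}, \S 8; note that the finiteness of $T(k_S)/\overline{T(k)}$ and of $\Sha^2_{S,\emptyset}(\widehat{T})$, which you also need, comes out of the same reduction.
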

\begin{proof}
First note that we can write in diagram ~\eqref{e:h1_h2_fin} $T(k_S)/\overline{T(k)}$ instead of $T(k_S)$,
because $\overline{T(k)}$ lies in the left kernel of the top pairing by diagram ~\eqref{e:h2_br_fin},
and in the kernel of $\partial^d_S = c_S$ by Lemma ~\ref{l:iso_A_ch}.
By Lemmas ~\ref{l:iso_A_ch} and ~\ref{l:iso_Sha_Be}, diagram ~\eqref{e:t_main_1} is just a version of diagram ~\eqref{e:h1_h2_fin},
where we take $c_S$ with the negative sign in order to obtain a commutative diagram from the anticommutative diagram ~\eqref{e:h1_h2_fin}.
Also ~\eqref{l:h2_h1_fin:it:iso} follows from Lemmas ~\ref{l:iso_A_ch} and ~\ref{l:iso_Sha_Be}.

It remains to prove ~\eqref{l:h2_h1_fin:it:pairing}.
By Ono's lemma (cf. ~\cite{Ono}, Theorem 1.5.1) there is an exact sequence
$$ 1 \to B \to Q_1 \to T^m\times Q_2 \to 1$$
such that $m >0$ is an integer, $Q_1$ and $Q_2$ are quasi-trivial $k$-tori and $B$ is a finite abelian $k$-group.

Now we construct diagram ~\eqref{e:t_main_1} for the  pair of $k$-groups  $G=Q_1$ and $H=B$, then $Q_1/B=T^m\times Q_2$.
Using ~\eqref{l:h2_h1_fin:it:iso} we obtain
$$
\xymatrix{
\left(T(k_S)/\overline{T(k)}\right)^m \oplus Q_2(k_S)/\overline{Q_2(k)} \ar[d]^{-c_S}_{\cong} \ar@{}[r]|-{\times}
&\left(\Sha_{S,\emptyset}^2(\widehat{T}) \right)^m\oplus\Sha_{S,\emptyset}^2(\widehat{Q_2}) \ar[r]^-{\cup_S} &  \QQ / \ZZ \ar@{=}[d] \\
\Ch_S^1(B) \ar@{}[r]|-{\times} &\Sha_{S,\emptyset}^1(\widehat{B}) \ar[u]^{\partial^u_S}_{\cong} \ar[r]^-{\cup_S} &  \QQ / \ZZ \,.
}
$$

Since $Q_2$ is a quasi-trivial torus, it has the weak approximation property,\\ i.e. $Q_2(k_S)/\overline{Q_2(k)} = 0$.
Further, by ~\cite{Sansuc}, (1.9.1), we have $\Sha_S^2(\widehat{Q_2}) = 0$.
Thus we have a commutative diagram
\be\label{e:t_main_1_2}\xymatrix{
\left(T(k_S)/\overline{T(k)}\right)^m \ar[d]^{-c_S}_{\cong} \ar@{}[r]|-{\times} &\left(\Sha_{S,\emptyset}^2(\widehat{T}) \right)^m \ar[r]^-{\cup_S} &  \QQ / \ZZ \ar@{=}[d] \\
\Ch_S^1(B) \ar@{}[r]|-{\times} &\Sha_{S,\emptyset}^1(\widehat{B}) \ar[u]^{\partial^u_S}_{\cong} \ar[r]^-{\cup_S} &  \QQ / \ZZ \\
}\ee
where the vertical arrows are isomorphisms.

But by ~\cite{Sansuc}, the proof of Lemma 1.4, the bottom pairing in diagram ~\eqref{e:t_main_1_2} is a perfect pairing of finite abelian groups.
Therefore the top pairing:
$$
\xymatrix{
\left(T(k_S)/\overline{T(k)}\right)^m \ar@{}[r]|-{\times} &\left(\Sha_{S,\emptyset}^2(\widehat{T}) \right)^m \ar[r]^-{\cup_S} &  \QQ / \ZZ
}
$$
 is a perfect pairing of finite abelian groups, and clearly the same is true for the pairing
$$
\xymatrix{
T(k_S)/\overline{T(k)} \ar@{}[r]|-{\times} & \Sha_{S,\emptyset}^2(\widehat{T})  \ar[r]^-{\cup_S} &  \QQ / \ZZ
}.
$$
Now \eqref{l:h2_h1_fin:it:iso} gives us ~\eqref{l:h2_h1_fin:it:pairing}.
\end{proof}

Now we complete the proof of Theorem ~\ref{t:main_torus}.
\begin{proof}[Proof of Theorem \ref{t:main_torus}]
By composing the isomorphisms:
$$  \Sha_S^1(\widehat{H}) \isoto  \Sha^2_S(\widehat{T}) \isoto \Be_S(T)$$
from ~\eqref{e:iso_sha12} and ~\eqref{e:iso_sha2_be}, respectively,
we obtain  isomorphisms
\begin{align*}
\beta_S\colon &\Sha^1_S(\widehat{H}) \isoto \Be_S(T),\\
\beta_{S,\emptyset} \colon &\Sha^1_{S,\emptyset}(\widehat{H}):=\Sha^1_S(\widehat{H}) /\Sha^1_\emptyset(\widehat{H})
\isoto \Be_{S,\emptyset}(T),
\end{align*}
so we have proved  (i).
We  prove (ii) (a,b).
First we put diagram ~\eqref{e:h2_br_fin} on top of diagram ~\eqref{e:t_main_1} and obtain the following
commutative diagram, in which the vertical arrows are isomorphisms:
\be\label{e:t_main_2}
\xymatrix{
T(k_S)/\overline{T(k)} \ar@{=}[d] \ar@{}[r]|-{\times} &\Be_{S,\emptyset}(T) \ar[r]^-{\langle\, ,\, \rangle_S} & \QQ/\ZZ \ar@{=}[d] \\
T(k_S)/\overline{T(k)} \ar[d]^{-c_S}_{\cong} \ar@{}[r]|-{\times}
&\Sha_{S,\emptyset}^2(\widehat{T}) \ar[u]^{\theta_S}_{\cong} \ar[r]^-{\cup_S} &  \QQ / \ZZ \ar@{=}[d] \\
\Ch_S^1(H) \ar@{}[r]|-{\times} &\Sha_{S,\emptyset}^1(\widehat{H})  \ar[u]^{\partial^u_S}_{\cong} \ar[r]^-{\cup_S} &  \QQ / \ZZ .
}
\ee
By Lemma ~\ref{l:h2_h1_fin}\eqref{l:h2_h1_fin:it:pairing} the pairings
in the two lower rows of diagram ~\eqref{e:t_main_2} are perfect pairings of finite abelian groups.
Since all vertical arrows of diagram ~\eqref{e:t_main_2} are isomorphisms,
we see that the pairing in the top row is a perfect pairing of finite abelian groups as well.
We conclude that the diagram
$$
\xymatrix{
T(k_S)/\overline{T(k)} \ar[d]^{-c_S}_{\cong} \ar@{}[r]|-{\times} &\Be_{S,\emptyset}(T) \ar[r]^-{\langle\, ,\, \rangle_S} & \QQ/\ZZ \ar@{=}[d] \\
\Ch_S^1(H) \ar@{}[r]|-{\times} &\Sha_{S,\emptyset}^1(\widehat{H})  \ar[u]^{\beta_S}_{\cong} \ar[r]^-{\cup_S} &  \QQ / \ZZ \\
}
$$
satisfies (a,b), as required.
\end{proof}

\begin{proof}[Proof of Theorem ~\ref{t:dual_t}]
Let $H$ be a $k$-group of multiplicative type.
Since the Galois module $\widehat{H}$ is a quotient of a permutation Galois module,
there exists an embedding $H \hookrightarrow G$  into a quasi-trivial $k$-torus $G$.
By applying Lemma ~\ref{l:h2_h1_fin} to the pair $(G,H)$ we obtain the desired duality.
\end{proof}


\section{Proofs: the general case}\label{s:general_case}
In this section we  prove our results in the general case
by using morphisms of pairs $(G,H)$
which preserve both the Brauer-Manin obstruction
and  our cohomological obstruction $c_S$. Using
such morphisms we  reduce the proof of Theorem ~\ref{t:main_intro}
to the case when $G$ is a torus, which was dealt with
in the previous section.

The following theorem is the  main result of this paper.
\begin{thm}\label{t:main_gen}
Let $(G,H)$ be a pair of $k$-groups as in \ref{subsec:pairs-of-groups} (with quasi-trivial $G$) over a number field $k$,
and let $S \subset \cV(k)$ a finite set of places of $k$. Then
\begin{enumerate}
\item\label{t:main_gen:it:be} There are canonical isomorphisms
$$\beta_S\colon \Sha^1_S(\widehat{H}) \isoto \Be_S(X),$$
$$ \beta_{S,\emptyset}\colon \Sha^1_{S,\emptyset}(\widehat{H}) \isoto \Be_{S,\emptyset}(X).$$
\item\label{t:main_gen:it:main} The following canonical diagram commutes:
\be\label{e:main}
\xymatrix{
X(k_S) \ar[d]^-{-c_S}  \ar@{}[r]|{\times} & \Be_{S,\emptyset}(X) \ar[r]^-{\langle\,,\,\rangle_S} &\QQ / \ZZ \ar@{=}[d] \\
 \Ch^1_S(k,H^{\mult}) \ar@{}[r]|{\times} & \Sha^1_{S,\emptyset}(\widehat{H}) \ar[u]^{\beta_{S,\emptyset}}_{\cong} \ar[r]^-{\cup_S} &  \QQ / \ZZ
}
\ee
In diagram ~\eqref{e:main} the bottom pairing is a perfect pairing of finite groups.
\end{enumerate}
\end{thm}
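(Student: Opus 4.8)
The plan is to deduce the general case from the torus case (Theorem~\ref{t:main_torus}) by means of morphisms of pairs, exploiting the functoriality of the Manin pairing (hence of $\mWS$) and of the cohomological obstruction $c_S$ (Lemma~\ref{l:c_is_func}). Since the bottom pairing in \eqref{e:main} is already known to be perfect for an arbitrary $k$-group of multiplicative type by Theorem~\ref{t:dual_t}, the real content is to produce the isomorphisms $\beta_S,\beta_{S,\emptyset}$ of part~\eqref{t:main_gen:it:be} together with the commutativity of~\eqref{e:main}. The idea is to connect $X=G/H$ to an honest torus quotient $T=G_1^{\tor}/H_1$ by a short chain of morphisms of pairs, each of which induces an isomorphism on $H^{\mult}$ --- hence on $\Ch_S^1(k,H^{\mult})$ and on $\Sha^1_{S,\emptyset}(\widehat H)$ --- and, crucially, also on $\Be_{S,\emptyset}$.

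The construction proceeds in two steps. First I would trivialize the geometric Picard group. By Proposition~\ref{l:H_k_pic_u} we have $\Pic\overline X\cong\sX(\Ker[H^{\mult}\to G^{\tor}])$; since the character module of any $k$-group of multiplicative type is a quotient of a permutation module, one may choose a quasi-trivial $k$-torus $R$ together with an embedding $\iota\colon H^{\mult}\hookrightarrow R$. Set $G_1=G\times R$ and let $\tilde H\subset G_1$ be the graph of $h\mapsto(h,\iota(\mu(h)))$, so that $\tilde H\cong H$ and $\tilde H^{\mult}=H^{\mult}$; then $(G_1,\tilde H)$ is again a pair with $G_1$ quasi-trivial, and $p_1\colon \tilde X:=G_1/\tilde H\to X$ is a torsor under $R$ (the torus $R$ acts freely through the second factor with quotient $X$). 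Because $\tilde H^{\mult}=H^{\mult}$ now embeds into $G_1^{\tor}=G^{\tor}\times R$, Proposition~\ref{l:H_k_pic_u} gives $\Pic\overline{\tilde X}=0$. In the second step I project $G_1\to G_1^{\tor}$: this is a morphism of pairs $(G_1,\tilde H)\to(G_1^{\tor},H_1)$ with $H_1$ the image of $\tilde H$, and since $\Pic\overline{\tilde X}=0$ the map $\tilde H^{\mult}\to H_1$ is an isomorphism, so $H_1\cong H^{\mult}$ is of multiplicative type, $T:=G_1^{\tor}/H_1$ is a torus, and $q\colon \tilde X\to T$ is the induced morphism. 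Both $p_1$ and $q$ induce isomorphisms on $H^{\mult}$, hence on $\Ch_S^1$ and on $\Sha^1_{S,\emptyset}(\widehat H)$.

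The hard part is to verify that $p_1$ and $q$ induce isomorphisms $\Be_{S,\emptyset}(X)\isoto\Be_{S,\emptyset}(\tilde X)$ and $\Be_{S,\emptyset}(T)\isoto\Be_{S,\emptyset}(\tilde X)$. For $q$ this is comparatively soft: by Proposition~\ref{l:H_k_pic_u} the spaces satisfy $U(\overline{\tilde X})\cong U(\overline T)$ (both equal $\sX(\Coker[H^{\mult}\to G_1^{\tor}])$) and both have trivial geometric Picard group, so $q^*$ is an isomorphism on $\Bra$ and therefore on the subquotient $\Be_{S,\emptyset}$. For $p_1$ the Picard groups genuinely differ, and this is the crux: $p_1$ is a torsor under the quasi-trivial torus $R$, so the discrepancy between the Brauer groups of $X$ and $\tilde X$ is governed, through the Sansuc exact sequences, by the cohomology of the permutation module $\widehat R$. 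Since $\widehat R$ is permutation, its groups $\Sha^i_S(\widehat R)$ vanish (cf.\ \cite{Sansuc}, (1.9.1)), and a diagram chase then shows that the locally-trivial-outside-$S$ subquotient $\Be_{S,\emptyset}$ is insensitive to this permutation piece; hence $p_1^*$ is an isomorphism on $\Be_{S,\emptyset}$. This is the step I expect to demand the most care.

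Finally I would assemble the diagram. Composing $(p_1^*)^{-1}$, $q^*$ and the torus-case isomorphism $\Sha^1_{S,\emptyset}(\widehat H)=\Sha^1_{S,\emptyset}(\widehat{H_1})\isoto\Be_{S,\emptyset}(T)$ of Theorem~\ref{t:main_torus}(i) yields the canonical isomorphism $\beta_{S,\emptyset}$ (and likewise $\beta_S$), proving~\eqref{t:main_gen:it:be}. For~\eqref{t:main_gen:it:main}, given $x_S\in X(k_S)$ I lift it to $\tilde x_S\in\tilde X(k_S)$ --- possible because $p_1$ is a torsor under $R$ and $H^1(k_v,R)=0$ --- and push it to $T(k_S)$ by $q$; the functoriality of the Manin pairing and of $c_S$ (Lemma~\ref{l:c_is_func}), combined with the isomorphisms just established and the commutativity of the corresponding diagram of Theorem~\ref{t:main_torus}(ii) for $T$, transports the commutativity of~\eqref{e:main} from $T$ back to $X$. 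The perfectness of the bottom pairing is exactly Theorem~\ref{t:dual_t}.
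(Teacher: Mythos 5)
Your proposal is correct and is essentially the paper's own proof: your pair $(G_1,\tilde H)$ with the graph embedding $h\mapsto(h,\iota(\mu(h)))$ is exactly the paper's $(G_Y,H_Y)$, your projection to $(G_1^{\tor},H_1)$ is its $(G_Z,H_Z)$, and your key steps --- Sansuc's exact sequence for the torsor under the quasi-trivial torus combined with $\Sha^2_S(\widehat{R})=0$ to show $p_1^*$ is an isomorphism on $\Be_{S}$, the units/Picard computation for $q^*$, lifting $x_S$ through the torsor, transporting commutativity by functoriality of both pairings, and invoking Theorem~\ref{t:dual_t} for perfectness --- correspond precisely to the paper's Lemmas~\ref{l:Y_prop}, \ref{l:premain1}, \ref{l:pre_main2} and its concluding diagram chase. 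No gaps.
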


\def\Brr{\mathrm{Br}}
\def\arm{{\mathrm a}}

In order to prove Theorem ~\ref{t:main_gen} we shall construct auxiliary pairs of $k$-groups and morphisms of pairs,
similar to ~\cite{Bor96}.

\begin{subsec}\label{subsec:G,H,X,Y}
Let  $(G,H)$ be a pair of $k$-groups  as in \ref{subsec:pairs-of-groups} (with quasi-trivial $G$) over a number field $k$.
Choose an embedding $i\colon H^{\mult} \to Q$
into a quasi-trivial torus $Q$.
Consider the embedding
$$i_*\colon H \to G\times_k Q, \qquad h \mapsto (h,i(\multmap(h))),$$
where $\multmap\colon H \to H^{\mult}$ is the canonical epimorphism.
Set $G_Y=G\times_k Q$, $H_Y=i_*(H)$. The pair $(G_Y,H_Y)$ defines a homogeneous space $Y=G_Y/H_Y=(G\times_k Q)/i_*(H)$ with a marked point $y^0$.
 The projection map $\pi\colon G_Y = G \times Q \to G$ is surjective and satisfies $\pi(H_Y) = H$, and therefore it defines a
 morphism of pairs as in \ref{subsec:pairs-of-groups} $\pi\colon (G_Y,H_Y)\to(G,H)$,
which in turn defines a morphism of varieties with marked points $\pi_*\colon (Y,y^0)\to (X,x^0)$.
Note that the map $H_Y^{\mult}\to G_Y^{\tor}$ is injective and that $(Y,\pi_*)$ is a torsor over $X$ under $Q$.
Since $Q$ is a quasi-trivial torus, we see that for any field $\mathbb{F}$ containing $k$ the map
$\pi_* \colon Y(\mathbb{F}) \to X(\mathbb{F})$
is surjective.
\end{subsec}

\begin{lem}\label{l:Y_prop}
With notation and assumptions of \ref{subsec:G,H,X,Y} we have:
 \begin{enumerate}
\item\label{l:Y_prop:it:U} $U(\overline{Y}) \cong \widehat{G_Y^{\tor}/ H_Y^{\mult}}$.
\item\label{l:Y_prop:it:pic} $\Pic \overline{Y} = 0$.
\item\label{l:Y_prop:it:bra} There is a canonical functorial isomorphism
$$\theta \colon H^2(k,U(\overline{Y})) \isoto \Bra Y \isoto\Brr_{y^0}Y$$
\end{enumerate}
where $\overline{Y} = Y \times_k \bar{k}.$
\end{lem}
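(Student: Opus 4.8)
The plan is to read off parts (i) and (ii) directly from Proposition~\ref{l:H_k_pic_u} applied to the pair $(G_Y,H_Y)$, and then to extract (iii) from Sansuc's exact sequence once (ii) is in hand. First I would check that $G_Y=G\times_k Q$ is again quasi-trivial: its toric quotient $G_Y^{\tor}=G^{\tor}\times_k Q$ is a product of quasi-trivial tori, hence quasi-trivial, while $G_Y^{\sss}=G^{\sss}$ is simply connected; in particular $\Pic\overline{G_Y}=0$, so Proposition~\ref{l:H_k_pic_u} applies to $(G_Y,H_Y)$ and gives
\begin{align*}
U(\overline{Y}) &\cong \sX\bigl(\Coker[H_Y^{\mult}\to G_Y^{\tor}]\bigr),\\
\Pic\overline{Y} &\cong \sX\bigl(\Ker[H_Y^{\mult}\to G_Y^{\tor}]\bigr).
\end{align*}

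Next I would use the observation recorded in~\ref{subsec:G,H,X,Y} that the homomorphism $H_Y^{\mult}\to G_Y^{\tor}$ is injective; this is precisely what the passage from $(G,H)$ to $(G\times_k Q,\,i_*(H))$ was designed to achieve, since the second component of $i_*$ is $i\circ\multmap$ with $i$ injective on $H^{\mult}$. Injectivity forces $\Ker[H_Y^{\mult}\to G_Y^{\tor}]=0$, so $\Pic\overline{Y}=0$, which is (ii). It also identifies the cokernel with the honest quotient $G_Y^{\tor}/H_Y^{\mult}$, whence $U(\overline{Y})\cong\widehat{G_Y^{\tor}/H_Y^{\mult}}$, which is (i).

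For (iii) I would invoke the exact sequence of low-degree terms attached to the Hochschild--Serre spectral sequence $H^p(k,H^q(\overline{Y},\Gm))\Rightarrow H^{p+q}(Y,\Gm)$, in the form used by Sansuc (\cite{Sansuc}, Lemma~6.3); this is the very sequence from which the isomorphism $\theta_{\mathbb{F}}$ of Lemma~\ref{l:h2_br} was extracted, and because $\Pic\overline{Y}=0$ the space $Y$ is in exactly the same situation as a torus there. Since $Y$ carries the marked $k$-point $y^0$, the constant part $\Gm$ splits off (the point retracts $\kbar[\overline{Y}]^*$ onto $\kbar^*$, splitting $1\to\kbar^*\to\kbar[\overline{Y}]^*\to U(\overline{Y})\to 1$), and the relevant segment reads
$$(\Pic\overline{Y})^{\Gal(\kbar/k)}\to H^2(k,U(\overline{Y}))\to\Bra Y\to H^1(k,\Pic\overline{Y}).$$
By (ii) both flanking terms vanish, so the middle arrow is the sought isomorphism $\theta\colon H^2(k,U(\overline{Y}))\isoto\Bra Y$; composing it with the canonical isomorphism $\Bra Y\isoto\Brr_{y^0}Y$ furnished by the splitting $\Brn Y=\Brz Y\oplus\Brs_{y^0}Y$ of Section~\ref{sec:prelim-Brauer} yields the displayed $\theta$. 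The functoriality of $\theta$ is inherited from that of the spectral sequence.

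I expect (i) and (ii) to be essentially formal once the injectivity of~\ref{subsec:G,H,X,Y} is invoked; the only genuine content is (iii). There the point that needs care is the bookkeeping of the splitting of $\Gm$ into its constant part $\kbar^*$ and the quotient $U(\overline{Y})$, together with the vanishing of \emph{both} outer terms of the four-term sequence---and all of this ultimately rests on the identity $\Pic\overline{Y}=0$ established in (ii), which is exactly why the auxiliary space $Y$ was introduced.
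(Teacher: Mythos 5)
Your proposal is correct and follows essentially the same route as the paper: parts (i) and (ii) are obtained from Proposition~\ref{l:H_k_pic_u} combined with the injectivity of $H_Y^{\mult}\to G_Y^{\tor}$ noted in \ref{subsec:G,H,X,Y}, and part (iii) from the Hochschild--Serre exact sequence of Sansuc's Lemma~6.3(ii), whose flanking terms $H^0(k,\Pic\overline{Y})$ and $H^1(k,\Pic\overline{Y})$ vanish by (ii), followed by composition with $\Bra Y\isoto\Brr_{y^0}Y$. Your additional verifications (quasi-triviality of $G_Y$, which legitimizes applying Proposition~\ref{l:H_k_pic_u}, and the splitting off of the constant part $\kbar^*$) are details the paper leaves implicit, and they are correct.
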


\begin{proof}
Since the map $H_Y^{\mult} \to G_Y^{\tor}$ is injective,
~\eqref{l:Y_prop:it:U} and ~\eqref{l:Y_prop:it:pic} follow from Proposition ~\ref{l:H_k_pic_u}.
It remains to prove ~\eqref{l:Y_prop:it:bra}. The Hochschild-Serre spectral sequence
$$
 H^p(\Gal(\kbar/k),H^q(\overline{Y},\mathbb{G}_m)) \Rightarrow H^n(Y,\mathbb{G}_m)
$$
gives rise to an exact sequence
$$ H^0(k,\Pic \overline{Y}) \to H^2(k,U(\overline{Y})) \xrightarrow{\theta_\arm} \Bra Y \to H^1(k,\Pic \overline{Y})$$
(see ~\cite{Sansuc}, Lemma 6.3(ii)).
We see from ~\eqref{l:Y_prop:it:pic} that $\theta_\arm$ is an isomorphism.
Composing $\theta_\arm$ with the canonical isomorphism $\Bra Y \isoto\Brr_{y^0}Y$,
we obtain the desired isomorphism $\theta$.
\end{proof}

\begin{subsec}
Consider the canonical epimorphism $\multmap\colon G_Y \to G_Y^{\mult} = G_Y^{\tor}$.
Then the following diagram commutes and has injective horizontal arrows and surjective vertical arrows:
\be \xymatrix{
H_Y \ar[d]^{\multmap} \ar[r] &G_Y \ar[d]^{\multmap}\\
H_Y^{\mult} \ar[r]^{j} & G_Y^{\tor}.
}\ee\label{e:hm_inj}

 We  construct a new pair $(G_Z,H_Z)$ as follows:
 $G_Z=G_Y^\tor$, $H_Z=j(H_Y^\mult)$.
 Set $Z=G_Z/H_Z$, it is a $k$-torus, we denote by $z^0$ its identity element.
We have a morphism of pairs $\multmap\colon (G_Y,H_Y)\to (G_Z, H_Z)$ as in \ref{subsec:pairs-of-groups}
and the induced morphism of homogeneous spaces
$\mu_*\colon (Y,y^0)\to (Z,z^0)$.
Thus we have  diagrams
$$
\xymatrix{
(G_Y, H_Y) \ar[d]^{\multmap}\ar[r]^{\pi} & (G,H) & &(Y,y^0) \ar[d]^{\multmap_*}\ar[r]^{\pi_*} & (X,x^0)\\
(G_Z,H_Z)                               &   & &(Z,z^0)
}
$$
Since $G_Z$ is a torus, we know that Theorem ~\ref{t:main_gen} is true for $Z$ (this is  Theorem ~\ref{t:main_torus}).
So all we have to do is to prove that both ${\multmap}$ and ${\pi}$ preserve $\mWS$ and $c_S$.
\end{subsec}

\begin{lem}\label{l:premain1}
The following diagrams commute and all the vertical arrows marked with $(\cong)$ are isomorphisms.
\be\label{e:ch_sha_iso}
\xymatrix{
X(k_S) \ar[r]^{c_S} & \Ch^1_S(H^{\mult}) &  & \Ch^1_S(H^{\mult})
\ar@{}[r]|-{\times} &\Sha^1_S(\widehat{H})\ar[d]_\cong^{\widehat{{\pi}}} \ar[r]^{\cup_S} &\QQ / \ZZ \ar@{=}[d]\\
Y(k_S) \ar[u]_{{\pi_*}} \ar[d]^{{\multmap_*}} \ar[r]^{c_S} & \Ch^1_S(H_Y^{\mult})\ar[u]^\cong_{{\pi}^{\multmap}}\ar[d]_\cong^{{\multmap}^{\multmap}}
&  & \Ch^1_S(H_Y^{\mult})\ar[u]^\cong_{{\pi}^{\multmap}}\ar[d]_\cong^{{\multmap}^{\multmap}}
                 \ar@{}[r]|-{\times} &\Sha^1_S(\widehat{H_Y}) \ar[r]^{\cup_S} &\QQ / \ZZ\\
Z(k_S) \ar[r]^{c_S} & \Ch^1_S(H_Z^{\mult}) &  & \Ch^1_S(H_Z^{\mult})
\ar@{}[r]|-{\times} &\Sha^1_S(\widehat{H_Z})
     \ar[u]^\cong_{\widehat{{\multmap}}}\ar[r]^{\cup_S} &\QQ / \ZZ\ar@{=}[u]\\
}
\ee
\end{lem}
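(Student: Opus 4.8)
The plan is to deduce both diagrams of~\eqref{e:ch_sha_iso} from two inputs: the functoriality of $c_S$ recorded in Lemma~\ref{l:c_is_func}, and the observation that $\pi$ and $\multmap$ induce \emph{isomorphisms} on the multiplicative-type data attached to $H_Y$. First I would treat the left-hand diagram. Its two squares assert exactly the compatibility of $c_S$ with the morphisms of pairs $\pi\colon(G_Y,H_Y)\to(G,H)$ and $\multmap\colon(G_Y,H_Y)\to(G_Z,H_Z)$ of~\ref{subsec:G,H,X,Y}, so they commute by Lemma~\ref{l:c_is_func} (applied with $\phi=\pi$ and with $\phi=\multmap$, respectively).

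Next I would show that the four arrows marked $(\cong)$ are isomorphisms by exhibiting the underlying group isomorphisms. For $\pi$, the formula $i_*(h)=(h,i(\multmap(h)))$ gives $\pi\circ i_*=\mathrm{id}_H$, so $\pi$ restricts to an isomorphism $H_Y\isoto H$; hence $\pi^{\multmap}\colon H_Y^{\mult}\isoto H^{\mult}$ and, dually, $\widehat{\pi}\colon\widehat{H}\isoto\widehat{H_Y}$ are isomorphisms of $k$-groups and of Galois modules. For $\multmap$, diagram~\eqref{e:hm_inj} factors $\multmap|_{H_Y}$ as $H_Y\twoheadrightarrow H_Y^{\mult}\xrightarrow{j}G_Y^{\tor}$ with $j$ injective and $H_Z=j(H_Y^{\mult})$; since $H_Z$ is of multiplicative type we have $H_Z^{\mult}=H_Z$, whence $\multmap^{\multmap}\colon H_Y^{\mult}\isoto H_Z^{\mult}$ and $\widehat{\multmap}\colon\widehat{H_Z}\isoto\widehat{H_Y}$ are isomorphisms. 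An isomorphism of $k$-groups of multiplicative type induces isomorphisms on all Galois cohomology functorially and compatibly with localization, so it descends to an isomorphism on the quotients $\Ch^1_S$ and restricts to an isomorphism on the subgroups $\Sha^1_S$. This yields all four $(\cong)$ arrows.

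Finally I would verify the right-hand diagram, i.e. the compatibility of the pairings $\cup_S$. The genuine input is the adjunction (projection) formula for cup products: for a homomorphism $f\colon M'\to M$ of $k$-groups of multiplicative type with dual $\widehat{f}\colon\widehat{M}\to\widehat{M'}$, the pairings $M'\times\widehat{M'}\to\Gm$ and $M\times\widehat{M}\to\Gm$ are intertwined by $f$ and $\widehat{f}$, so that $\inv_v(f_*a\cup b)=\inv_v(a\cup\widehat{f}_*b)$ for all $a\in H^1(k_v,M')$, $b\in H^1(k_v,\widehat{M})$ and each $v\in S$; this can be proved on cochains exactly as in Lemma~\ref{l:cup_lem}. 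Summing over $v\in S$ and applying it with $f=\pi^{\multmap}$ and with $f=\multmap^{\multmap}$ produces the two squares. The step I expect to cost the most care is the bookkeeping around this formula: one must check that $\cup_S$ is well defined on the relevant subquotients (as in the discussion preceding diagram~\eqref{e:h1_h2_fin}) and that $\widehat{\pi},\widehat{\multmap}$ preserve $\Sha^1_S$ while $\pi^{\multmap},\multmap^{\multmap}$ respect the $\Ch^1_S$-quotients, so that the squares even make sense. Since all of these maps are induced by the group isomorphisms established above, this compatibility is automatic, and no deeper obstacle remains.
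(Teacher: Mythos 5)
Your proposal is correct and takes essentially the same route as the paper, whose entire proof is the observation that the left diagram follows from the functoriality of $c_S$ (Lemma~\ref{l:c_is_func}) and that ${\pi}^{\multmap}\colon H_Y^{\mult}\to H^{\mult}$ and ${\multmap}^{\multmap}\colon H_Y^{\mult}\to H_Z^{\mult}$ are isomorphisms. Your additional verifications --- that $\pi\circ i_*=\mathrm{id}_H$ and the factorization through the injection $j$ yield these isomorphisms, and that the cup-product adjunction $\inv_v(f_*a\cup b)=\inv_v(a\cup\widehat{f}_*b)$ gives the right-hand diagram --- simply make explicit details the paper leaves to the reader.
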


\begin{proof}
The lemma follows from the functoriality of $c_S$ and from the fact that ${\pi}^{\multmap}\colon H_Y^{\mult} \to H^{\mult}$
and ${\multmap}^{\multmap}\colon H_Y^{\mult} \to H_Z^{\mult}$ are isomorphisms.
\end{proof}

\begin{lem}\label{l:pre_main2}
The following diagram commutes and all the vertical arrows marked with $(\cong)$ are isomorphisms.
\be\label{e:bm_func}
\xymatrix{
X(k_S) \ar@{}[r]|-{\times} &\Be_S(X)\ar[d]_\cong^{{{\pi}}^*} \ar[r]^{\langle\,,\,\rangle_S} &\QQ / \ZZ \ar@{=}[d]\\
Y(k_S) \ar[u]_{{\pi_*}} \ar[d]^{{\multmap_*}} \ar@{}[r]|-{\times} &\Be_S(Y) \ar[r]^{\langle\,,\,\rangle_S} &\QQ / \ZZ\\
Z(k_S) \ar@{}[r]|-{\times} &\Be_S(Z) \ar[u]^\cong_{{{\multmap}}^*}\ar[r]^{\langle\,,\,\rangle_S} &\QQ / \ZZ\ar@{=}[u]\\
}
\ee
\end{lem}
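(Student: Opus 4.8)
The plan is to treat the two assertions of the lemma separately: first the commutativity of both squares of \eqref{e:bm_func}, then the claim that $\pi^*$ and $\multmap^*$ are isomorphisms on $\Be_S$. The commutativity is nothing but the functoriality of the Manin pairing already recorded in \S\ref{sec:prelim-Brauer}. Concretely, for any morphism of pointed $k$-varieties $f\colon(A,a^0)\to(B,b^0)$ the pullback of Brauer classes is adjoint to evaluation, $(f^*b)(a)=b(f(a))$ for $a\in A(k_v)$ and $b\in\Br B$; summing the local invariants $\inv_v$ over $v\in S$ gives $\langle f(a_S),b\rangle_S=\langle a_S,f^*b\rangle_S$. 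Applying this with $f=\pi_*\colon Y\to X$ yields the top square and with $f=\multmap_*\colon Y\to Z$ the bottom square. That $\pi^*$ and $\multmap^*$ carry $\Be_S$ into $\Be_S$ is immediate, since pullback commutes with each $\loc_v$ and hence preserves the condition of being locally trivial outside $S$.

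For $\multmap^*$ I would argue on the level of $U$-invariants, where everything is clean. By Lemma \ref{l:Y_prop}\eqref{l:Y_prop:it:pic} we have $\Pic\overline{Y}=0$, and by Proposition \ref{l:H_k_pic_u}\eqref{l:H_k_pic_u:it:Pic} also $\Pic\overline{Z}=0$, because $H_Z^{\mult}\to G_Z^{\tor}$ is injective. Hence the isomorphism of Lemma \ref{l:Y_prop}\eqref{l:Y_prop:it:bra} (and its analogue for $Z$) identifies $\Bra Y\cong H^2(k,U(\overline{Y}))$ and $\Bra Z\cong H^2(k,U(\overline{Z}))$ functorially. Moreover $\multmap^*\colon U(\overline{Z})\to U(\overline{Y})$ is an isomorphism of Galois modules, since by Proposition \ref{l:H_k_pic_u}\eqref{l:H_k_pic_u:it:U} both groups are canonically $\widehat{G_Y^{\tor}/H_Y^{\mult}}$. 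Therefore $\multmap^*\colon\Bra Z\to\Bra Y$ is an isomorphism; being compatible with all the localization maps $\loc_v$, it restricts to an isomorphism $\Be_S(Z)\isoto\Be_S(Y)$.

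The map $\pi^*$ is the main obstacle, precisely because here $\Pic\overline{X}$ need not vanish. I would use that, by \ref{subsec:G,H,X,Y}, $(Y,\pi_*)$ is a torsor over $X$ under the quasi-trivial torus $Q$, and invoke the standard exact sequence of Galois modules attached to a torsor under a torus (cf.\ \cite{Sansuc}),
$$0\to U(\overline{X})\to U(\overline{Y})\to\widehat{Q}\to\Pic\overline{X}\to 0,$$
using $\Pic\overline{Q}=0$ and $\Pic\overline{Y}=0$. On all of $\Bra$ the map $\pi^*$ is distorted by the terms $\widehat{Q}$ and $\Pic\overline{X}$, so it is generally \emph{not} an isomorphism; the point is that these defects die after imposing the $\Be_S$-condition. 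Since $\widehat{Q}$ is a permutation module we have $H^1(k,\widehat{Q})=0$, whence $\Sha^1_S(\widehat{Q})=0$, and $\Sha^2_S(\widehat{Q})=0$ by \cite{Sansuc}, (1.9.1) (the same input used in Lemma \ref{l:iso_Sha_Be}). Splitting the four-term sequence into two short exact sequences, passing to the subgroups of classes locally trivial outside $S$, and running the diagram chase of Lemma \ref{lem:abstract-nonsence} as in the proof of Lemma \ref{l:iso_Sha_Be}, the vanishing of both $\Sha_S$-groups of $\widehat{Q}$ forces $\pi^*$ to be an isomorphism on the $\Be_S$-subquotients, i.e.\ $\pi^*\colon\Be_S(X)\isoto\Be_S(Y)$.

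The point where I expect the real work to lie is the interface between the Brauer-theoretic definition of $\Be_S$ and this cohomological computation. Whereas for $Y$ and $Z$ the map $H^2(k,U(\overline{\,\cdot\,}))\to\Bra$ is an isomorphism, for $X$ the corresponding map of \cite{Sansuc}, Lemma 6.3, is only surjective onto $\Ker[\Bra X\to H^1(k,\Pic\overline{X})]$; so I must match the condition defining $\Be_S(X)$ (local triviality of a Brauer class outside $S$) with local triviality outside $S$ of the associated cohomology classes, and show that the contribution of $\Pic\overline{X}$ together with the transgression in the Hochschild--Serre comparison is controlled by $\Sha^1_S(\Pic\overline{X})$ and the vanishing $\Sha^2_S(\widehat{Q})=0$. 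Carrying this bookkeeping out carefully, and checking its compatibility with the connecting map of the torsor sequence, is the heart of the argument; once it is done, the isomorphisms for $\pi^*$ and $\multmap^*$ follow as above.
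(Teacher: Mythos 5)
Your treatment of the commutativity (functoriality of the Manin pairing) and of $\multmap^*$ is correct and coincides with the paper's own argument: since $\Pic\overline{Y}=\Pic\overline{Z}=0$ and $U(\overline{Y})\cong U(\overline{Z})\cong\widehat{G_Y^{\tor}/H_Y^{\mult}}$, Lemma \ref{l:Y_prop}\eqref{l:Y_prop:it:bra} identifies $\Brs_{z^0}Z\isoto\Brs_{y^0}Y$ compatibly with localization. The genuine gap is in the $\pi^*$ part. Your route runs through $H^2(k,U(\overline{\,\cdot\,}))$, but for $X$ the comparison map $H^2(k,U(\overline{X}))\to\Bra X$ of \cite{Sansuc}, Lemma 6.3(ii), is in general neither injective nor surjective: its defect is measured by $\Pic\overline{X}\cong\sX(\Ker[H^{\mult}\to G^{\tor}])$, which does not vanish here, and $\Be_S(X)\subset\Brs_{x^0}X$ is defined by local triviality in $\Br X_{k_v}$, not by any $\Sha$-condition on $H^2(k,U(\overline{X}))$. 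You explicitly defer this translation (``the heart of the argument''), so the proposal as written does not prove that $\pi^*\colon\Be_S(X)\to\Be_S(Y)$ is an isomorphism. Moreover, even the purely cohomological chase is not forced by $\Sha^1_S(\widehat{Q})=\Sha^2_S(\widehat{Q})=0$ alone: splitting your four-term sequence introduces the module $K=\Ker[\widehat{Q}\to\Pic\overline{X}]$, whose $H^1(k,K)$ is a quotient of $(\Pic\overline{X})^{\Gal(\kbar/k)}$, and no vanishing of $\Sha^1_S(K)$ is available --- unlike in Lemma \ref{l:iso_Sha_Be}, where the middle term was the permutation module $\widehat{G}$ itself.

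The paper sidesteps all of this by staying at the level of Brauer groups: since $(Y,\pi_*)$ is a torsor over $X$ under $Q$, Sansuc's exact sequence (6.10.3) gives
$$
\Pic Q\to\Brn X\xrightarrow{\;\pi^*\;}\Brn Y\to\Brs_e Q,
$$
and quasi-triviality of $Q$ yields $\Pic Q\cong H^1(k,\widehat{Q})=0$ and $\Brs_e Q\cong H^2(k,\widehat{Q})$ (\cite{Sansuc}, Lemma 6.9(ii)). This produces exact rows $0\to\Brs_{x^0}X\to\Brs_{y^0}Y\to H^2(k,\widehat{Q})$ over $k$ and over $k_{S^\complement}$, and applying Lemma \ref{lem:abstract-nonsence} to the localization diagram together with $\Sha^2_S(\widehat{Q})=0$ (\cite{Sansuc}, (1.9.1)) gives at once $\pi^*\colon\Be_S(X)\isoto\Be_S(Y)$. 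If you wish to salvage your outline, the cleanest repair is to replace your $U$--$\Pic$ computation for $X$ by this Brauer-level sequence; its inputs are precisely the two vanishing statements you had already isolated, and no bookkeeping with $\Pic\overline{X}$ or the Hochschild--Serre transgression is then needed.
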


\begin{proof}
The commutativity of the diagram follows from the functoriality of $\langle\,,\,\rangle_S$.
We prove that ${\pi}^*$ is an isomorphism.
Since $Y$ is a torsor over $X$ under $Q$,  by ~\cite{Sansuc}, (6.10.3), there is an exact sequence
$$
\Pic Q \to \Brn X \xrightarrow{{\pi}^*} \Brn Y \to \Brr_e Q,
$$
hence an exact sequence
$$
\Pic Q \to \Brr_{x^0} X \xrightarrow{{\pi}^*} \Brr_{y^0} Y \to \Brr_e Q.
$$
By ~\cite{Sansuc}, Lemma 6.9(ii), we have $\Pic Q \cong H^1(k,\widehat{Q})=0$ (because $Q$ is quasi-trivial)
and  $\Brr_e Q \cong H^2(k,\widehat{Q})$.
We obtain a commutative diagram
\be\label{e:br_iso1}
\xymatrix{
0 \ar[r] & \Brr_{x^0} X \ar[r]^{{\pi}^*}\ar[d]^{\loc_{S^\complement}} \ar[r]
&\Brr_{y^0} Y \ar[d]^{\loc_{S^\complement}}\ar[r] & H^2(k,\widehat{Q}) \ar[d]^{\loc_{S^\complement}} \\
0 \ar[r] & \bigoplus_{v \in S^{\complement}} \Brr_{x^0} X_{k_v} \ar[r]^{{\pi}^*} \ar[r]
&\bigoplus_{v \in S^{\complement}} \Brr_{y^0} Y_{k_v} \ar[r] & H^2(k_{S^{\complement}},\widehat{Q})
}
\ee
where $S^\complement=\sV(k)\smallsetminus S$.
By ~\cite{Sansuc}, (1.9.1), we have $\Sha^2_S(\widehat{Q}) = 0$.
By Lemma \ref{lem:abstract-nonsence} our diagram ~\eqref{e:br_iso1} induces an isomorphism
$$
{\pi}^* \colon \Be_S(X) \to \Be_S(Y).
$$

It is remains to prove that ${\multmap}^*$ is an isomorphism.
From the construction of $(G_Z,H_Z)$ we obtain a commutative diagram with injective horizontal arrows
and bijective vertical arrows
$$  \label{e:bra_iso}
\xymatrix{
H_Y^{\mult}\ar[d]_{\cong}^{{\multmap}} \ar[r] &G_Y^{\tor}\ar[d]_{\cong}^{{\multmap}}\\
H_Z^{\mult}                    \ar[r] &G_Z^{\tor}
}
$$
From this diagram
we obtain an isomorphism ${\multmap}_*\colon G_Y^{\tor}/H_Y^{\mult} \to G_Z^{\tor}/H_Z^{\mult}$.
By  Lemma ~\ref{l:Y_prop} and Diagram (8) in \S 4.4 we have canonical isomorphisms
\begin{gather*}
\Brr_{z^0} Z \cong H^2(k,U(\overline{Z})) \cong H^2(k,\widehat{G_Z^{\tor}/H_Z^{\mult}}),\\
\Brr_{y^0} Y \cong H^2(k,U(\overline{Y})) \cong H^2(k,\widehat{G_Y^{\tor}/H_Y^{\mult}}).
\end{gather*}
We obtain a commutative diagram
$$ \label{e:Br-H2}
\xymatrix{
\Brr_{z^0} Z\ar[r]^-\cong\ar[d]^{\mu^*}         & H^2(k,\widehat{G_Z^{\tor}/H_Z^{\mult}})\ar[d]^{\mu^*}   \\
\Brr_{y^0} Y \ar[r]^-\cong                   &H^2(k,\widehat{G_Y^{\tor}/H_Y^{\mult}}),
}
$$
where the right-hand vertical arrow is an isomorphism.
It follows that the left-hand vertical arrow in this diagram is an isomorphism,
hence the homomorphism ${\multmap}^*\colon\Be_S(Z)\to\Be_S(Y)$ is an isomorphism.
\end{proof}

Now we complete the proof of Theorem ~\ref{t:main_gen}.

\begin{proof}[Proof of Theorem ~\ref{t:main_gen}]
We  prove ~\eqref{t:main_gen:it:be}.
Since $G_Z$ is a quasi-trivial torus, by Theorem ~\ref{t:main_torus}\eqref{t:main_gen:it:be}  we have an isomorphism
$$
\beta^Z_S\colon \Sha^1_S(\widehat{H_Z}) \isoto \Be_S(Z).
$$
Using diagrams ~\eqref{e:ch_sha_iso} and ~\eqref{e:bm_func}, we obtain a diagram with bijective arrows
$$
\xymatrix{
\Sha^1_S(\widehat{H}) \ar[r]_\cong^{\widehat{{\pi}}} &\Sha^1_S(\widehat{H_Y})
        &\Sha^1_S(\widehat{H_Z})\ar[l]^\cong_{\widehat{{\multmap}}} \ar[d]_\cong^{\beta^Z_S} \\
\Be_S(X) \ar[r]_\cong^{{\pi}^*} &\Be_S(Y) &\Be_S(Z) \ar[l]^\cong_{{\multmap}^*}
}
$$
This diagram gives us the required isomorphism
 $$\beta_S \colon \Sha^1_S(\widehat{H}) \isoto \Be_S(X).$$
The map $\beta_S$ induces an isomorphism
$$\beta_{S,\emptyset} \colon \Sha^1_{S,\emptyset}(\widehat{H}) \isoto \Be_{S,\emptyset}(X).$$

We prove ~\eqref{t:main_gen:it:main}.
By Theorem ~\ref{t:dual_t} the bottom pairing in diagram  ~\eqref{e:main} is a perfect pairing of finite groups.
We prove the commutativity of this diagram
by a diagram chase in diagrams  ~\eqref{e:ch_sha_iso} and ~\eqref{e:bm_func}.

Let $x_S \in X(k_S)$ and $t \in \Sha^1_S(\widehat{H})$.
Since $Y\to X$ is a torsor under the quasi-trivial torus $Q$,
there exists $y_S \in Y(k_S)$ such that ${\pi}(y_S) = x_S$.
We set $z_S = {\multmap}(y_S)\in Z(k_S)$.
We set $t_Y := \widehat{{\pi}}(t) \in \Sha^1_S(\widehat{H_Y})$
and $t_Z = {\widehat{{\multmap}}}^{-1}(t_Y) \in \Sha^1_S(\widehat{H_Z})$.
By diagrams ~\eqref{e:ch_sha_iso} we have
$$ \cup_S(-c_S(x_S) , t) = \cup_S(-c_S(y_S) , t_Y) =  \cup_S(-c_S(z_S) , t_Z).$$
By Theorem ~\ref{t:main_torus} we have
$$ \cup_S(-c_S(z_S) , t_Z) = \langle z_S, \beta^Z_S(t_Z) \rangle_S.$$
Set $b_Z=\beta^Z_S(t_Z)$, then
$$
\cup_S(-c_S(z_S) , t_Z)=\langle z_S, b_Z \rangle_S.
$$
Set $b_Y=\multmap^*(b_Z)\in \Be_S(Y)$, \ $b_X=(\pi^*)^{-1}(b_Y)\in\Be_S(X)$,
then by diagram ~\eqref{e:bm_func} we have
$$
\langle z_S, b_Z \rangle_S=\langle y_S, b_Y \rangle_S=\langle x_S, b_X \rangle_S.
$$
Since $b_X=\beta_S(t)$,
we obtain that
$$\cup_S(-c_S(x_S) , t) = \langle x_S, \beta_S(t) \rangle_S,$$
as required.
\end{proof}

\end{document}